\newtheorem{theorem}{Theorem}[section]
 \newtheorem{lemma}[theorem]{Lemma}
 \newtheorem{proposition}[theorem]{Proposition}
 \newtheorem{corollary}[theorem]{Corollary}
 \newtheorem{conjecture}[theorem]{Conjecture}
\newtheorem{fact}[theorem]{Fact}
\theoremstyle{definition}
 \newtheorem{definition}[theorem]{Definition}
 \theoremstyle{remark}
 \newtheorem{remark}[theorem]{Remark}
\newcommand{\N}{\mathbf{N}}
\newcommand{\Q}{\mathbf{Q}}
\newcommand{\rig}{\mathrm{rig}}
\newcommand{\Spec}{\mathrm{Spec}}
\newcommand{\Frac}{\mathrm{Frac}}
\newcommand{\ord}{\operatorname{ord}}
\renewcommand\labelenumi{(\roman{enumi})}
\renewcommand\theenumi\labelenumi
\begin{document}

\title[Infiniteness]{On infiniteness of integral overconvergent de Rham--Witt cohomology modulo torsion}
\author[V. Ertl]{Veronika Ertl}
\address[V. Ertl]{Fakult\"at f\"ur Mathematik, Universit\"at Regensburg, Universit\"atsstra\ss{}e 31, 93053 Regensburg, Germany}
\email{veronika.ertl@mathematik.uni-regensburg.de}
\author[A. Shiho]{Atsushi Shiho}
\address[A. Shiho]{Graduate School of Mathematical Sciences, the University of Tokyo, 3-8-1 Komaba, Meguro-ku, Tokyo 153-8914, Japan}
\email{shiho@ms.u-tokyo.ac.jp}  
\date{\today}
\thanks{\copyright 2019 Tohoku Mathematical Journal. All rights reserved under the universal copyright convention.}
\maketitle 

\begin{abstract}
In this article, we give examples of smooth varieties of positive characteristic whose first integral overconvergent 
de Rham--Witt cohomology modulo torsion is not finitely generated over 
the Witt ring of the base field. 

\noindent
\textit{Key Words}: infiniteness, overconvergent de Rham--Witt cohomology, 
$C_{ab}$-curves \\
\textit{Mathematics Subject Classification 2010}: 14F30, 14F40, 14H50
\end{abstract}
\medskip

%%%%%%%%%%%%%
%%%
\section*{Introduction}
%%%
%%%%%%%%%%%%

Let $k$ be a perfect field of characteristic $p>0$, 
let $W := W(k)$ be the ring of $p$-typical Witt vectors of $k$ and 
let $K$ be the fraction field of $W$. 
For a smooth variety $X$ over $k$, its crystalline cohomology 
$H^i_{\mathrm{crys}}(X/W)$ is defined by Berthelot \cite{B_1974} and it is shown 
that, when $X$ is proper, it is finitely generated over $W$. (See \cite{BO_1978} for example.) 
Also, Illusie \cite{I_1979} introduced the notion of de Rham--Witt 
complex $W\Omega_X^{\bullet}$ which is a complex of \'etale sheaves on $X$ and 
proved that its cohomology $H^i(X, W\Omega_X^{\bullet})$ (called the de Rham--Witt 
cohomology) is isomorphic to the crystalline cohomology $H^i_{\mathrm{crys}}(X/W)$. 
Thus the de Rham--Witt cohomology is finitely generated over $W$ when 
$X$ is proper smooth over $k$. 

When $X$ is smooth but not proper, its crystalline cohomology (hence its 
de Rham--Witt cohomology as well) is not necessarily finitely generated. 
To remedy this infiniteness, 
Berthelot \cite{B_1996}, \cite{B_1997} introduced the notion of rigid cohomology $H^i_{\mathrm{rig}}(X/K)$ 
as a corrected variant of crystalline cohomology tensored with $\Q$  
using $p$-adic analytic geometry, and proved that it is finite dimensional over $K$. 
However, rigid cohomology does not a priori have a canonical $W$-lattice. 
So it would be an interesting problem to construct a finitely generated $W$-lattice of 
rigid cohomology which has nice properties. 

For a smooth variety $X$ over $k$, 
Davis--Langer--Zink \cite{DLZ_2011} introduced the overconvergent de Rham--Witt 
complex $W^{\dagger}\Omega_X^{\bullet}$ as a certain subcomplex of 
$W\Omega_X^{\bullet}$ and proved that, when $X$ is quasi-projective, 
its rational cohomology $H^i(X, W^{\dagger}\Omega_X^{\bullet}) \otimes \Q$ 
is isomorphic to the rigid cohomology $H^i_{\mathrm{rig}}(X/K)$. 
Although it is well-known that the integral overconvergent de Rham--Witt 
cohomology $H^i(X, W^{\dagger}\Omega_X^{\bullet})$ can have infinitely generated torsions (e.g. in the case $X = \mathbf{A}^1_k$ and $i=1$), 
one may naively expect that the image 
\[ \overline{H}^i(X, W^{\dagger}\Omega_X^{\bullet}) := 
{\mathrm{Im}}(H^i(X, W^{\dagger}\Omega_X^{\bullet}) \to 
H^i(X, W^{\dagger}\Omega_X^{\bullet}) \otimes \Q), \] 
which we will call the integral overconvergent de Rham--Witt cohomology modulo torsion, 
might give a finitely generated $W$-lattice of the rigid cohomology. 

However, 
various problems seem to arise when one tries to adapt the proofs for finiteness of rigid cohomology in \cite{B_1997}, \cite{M_1997}, \cite{T_1999}, \cite{T_2003}, and \cite{K_2006} to 
the case of $\overline{H}^i(X, W^{\dagger}\Omega_X^{\bullet})$, because 
all of these proofs use homological algebra at some instance which 
is rather delicate when dividing by torsion.
In this article, we give a negative answer to 
the above expectation by giving counterexamples. In particular, 
we prove the following result. 

\begin{theorem}[= a weak form of Corollary \ref{cor}]
For any prime number $p$ and any perfect field $k$ of characteristic $p$, 
there exists an affine smooth curve $X$ over $k$ such that the first integral 
overconvergent de Rham--Witt cohomology modulo torsion 
$\overline{H}^1(X, W^{\dagger}\Omega_X^{\bullet})$ is \textbf{not} finitely generated 
over $W$. 
\end{theorem}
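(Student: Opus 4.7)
The plan is to take $X$ to be an explicit affine $C_{ab}$-curve and to exhibit a sequence of classes in $\overline{H}^1(X, W^{\dagger}\Omega^{\bullet}_X)$ whose $p$-adic denominators, measured inside the finite-dimensional $K$-vector space $H^1_{\rig}(X/K)$, are unbounded. Since any finitely generated $W$-submodule of such a vector space has bounded denominators, this will force $\overline{H}^1$ to fail to be finitely generated. Concretely, I would fix a smooth projective curve $\overline X$ whose function field is $k(x,y)$ with $y^a = f(x)$ for some $f \in k[x]$ of degree $b$ coprime to $a$, and set $X := \overline X \setminus \{\infty\}$, where $\infty$ is the unique point at infinity. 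The advantage of this set-up is that a local parameter $t$ at $\infty$ is available in closed form, and the overconvergent de Rham--Witt complex $W^{\dagger}\Omega^{\bullet}_X$ is cut out by explicit pole-order growth conditions at $\infty$ on the ghost components of Witt vectors in the coordinate ring $A$ of $X$.

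Next, I would construct explicit closed overconvergent forms $\omega_n \in W^{\dagger}\Omega^1_X$. A natural family is $\omega_n := d\bigl(V^n [g_n]\bigr)$, where $[g_n]$ is the Teichm\"uller lift of a carefully chosen $g_n \in A$ with prescribed polar expansion at $\infty$ and $V$ is the Verschiebung. These are automatically closed, and the standard identity $F\,dV = d$ in the de Rham--Witt complex shows that, in the rational cohomology, $[\omega_n]$ differs from a $p^n$-multiple of an \emph{a priori} non-overconvergent class $\eta_n$ built from $F^n$-twists of $g_n$. By tuning $g_n$ so that its $F^n$-twist has pole order at $\infty$ lying just outside the Davis--Langer--Zink overconvergence bound, one arranges that $\eta_n$ is nonzero in $H^1_{\rig}(X/K)$ and that $[\omega_n]$ has $p$-adic valuation exactly $-n$, yielding the required unbounded family in $\overline{H}^1$.

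The hard part is to prove the lower bound on the denominators, i.e.\ to exclude a representative $\omega_n = d\eta + p\,\omega'_n$ with $\eta, \omega'_n$ both overconvergent. The overconvergence check for $\omega_n$ itself is a bookkeeping exercise using the $C_{ab}$-structure together with the explicit action of $V$ on Witt vectors. Ruling out smaller-denominator representatives, however, requires a genuinely integral analysis of exact forms in $W^{\dagger}\Omega^{\bullet}_X$ near $\infty$: I would approach this by constructing a local residue-type invariant on $W^{\dagger}\Omega^1_X$ taking values in a completed Witt module on the formal punctured disk at $\infty$, verifying that it vanishes on overconvergent exact forms, and computing it on $\omega_n$ to witness the denominator. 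This localization step at $\infty$ is where the argument genuinely uses the $C_{ab}$-hypothesis and is where I expect to concentrate the real effort.
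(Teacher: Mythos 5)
Your opening frame is the right one, and it is also the paper's: $H^1_{\rig}(X/K)$ is finite dimensional, a finitely generated $W$-submodule of it has bounded denominators with respect to any fixed lattice, so it suffices to produce integral classes whose coordinates in a fixed basis have $p$-adic valuation tending to $-\infty$. But the witnesses you propose cannot work. If $g_n \in A$ and $[g_n]$ is its Teichm\"uller lift, then $[g_n] \in W^{\dagger}(A)$ and $V$ preserves $W^{\dagger}(A)$, so $V^n[g_n]$ is a global overconvergent Witt vector and $\omega_n = d\bigl(V^n[g_n]\bigr)$ is exact in $W^{\dagger}\Omega^{\bullet}_X$; its class is zero. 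The natural repair --- taking forms in the image of $V^n$ that are not visibly exact, such as $V^n\bigl([g]^{m-1}d[g]\bigr) = (p^n/m)\, dV^n[g^m]$ for $p \nmid m$ --- produces exactly the $p$-power torsion classes responsible for the well-known infinitely generated \emph{torsion} of $H^1(X,W^{\dagger}\Omega^{\bullet}_X)$ (as for $\mathbf{A}^1_k$), and these die in $H^1 \otimes \Q$, hence contribute nothing to $\overline{H}^1$. Relatedly, your ``hard part'' --- a residue invariant excluding $\omega_n = d\eta + p\,\omega'_n$ with $\eta,\omega'_n$ overconvergent --- proves non-$p$-divisibility of a class in the integral cohomology, which is a statement about the integral/torsion structure and neither implies nor follows from unboundedness of the image modulo torsion. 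What is actually needed is the \emph{opposite} phenomenon: a fixed nonzero class of $H^1_{\rig}$ that is divisible by arbitrarily high powers of $p$ inside $\overline{H}^1$.

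The paper obtains this with no de Rham--Witt or local-at-$\infty$ analysis at all. For the superelliptic curve $y^a + x^b + \alpha = 0$ the manifestly integral algebraic forms $x^{i}y^{j}dx$ (large $i$) already lie in the image of integral cohomology, and the Denef--Vercauteren reduction relation gives $[x^{l+b-1}y^jdx] = \bigl[-\tfrac{la\alpha}{la+jb+ab}\,x^{l-1}y^jdx\bigr]$; iterating and choosing $j,r$ with $p \equiv ra + jb + ab \pmod{ab}$, one shows $[x^{(N_d+1)b+(r-1)}y^jdx] = C_d[x^{r-1}y^jdx]$ with $\nu_p(C_d) \le -d$, so $\overline{H}^1_{\mathrm{dR}}$ contains the whole line $K[x^{r-1}y^jdx]$. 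This is then transported to Monsky--Washnitzer cohomology (where $\iota_{*,\Q}$ is an isomorphism for $C_{ab}$-curves by the Denef--Vercauteren basis computations) and finally to $\overline{H}^1(X,W^{\dagger}\Omega^{\bullet}_X)$ by the comparison theorem. If you want to salvage your plan, replace the $dV^n$-construction by these algebraic forms and replace the residue argument by the elementary valuation estimate for $\prod_{n=0}^{N}\tfrac{nb+r}{nab+(ra+jb+ab)}$; the overconvergence bookkeeping at $\infty$ then becomes unnecessary.
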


We note that to facilitate the necessary computations, we consider integral Monsky--Washnitzer cohomology, 
where we provide examples  for infiniteness modulo torsion for its first cohomology group.
Then we deduce the above theorem from a comparison isomorphism of integral overconvergent de Rham--Witt cohomology and integral Monsky--Washnitzer cohomology. 
For an example of a higher dimensional or a non-affine smooth variety $X$ with 
infinitely generated $\overline{H}^1(X, W^{\dagger}\Omega_X^{\bullet})$, 
see Theorem \ref{main4} and Remark \ref{main5}.

%%%%%%%%%%%%%%%
\subsection*{Conventions}
%%%%%%%%%%%%%%%

Throughout the paper, $p$ will be a fixed prime number, 
$k$ will be a perfect field of positive characteristic $p>0$, $W := W(k)$ its ring of $p$-typical Witt vectors and $K=\Frac(W(k))$ will be the fraction field of $W$. 
By a variety over $k$ we always mean a separated and integral scheme of finite type 
over $k$.
Let moreover $\nu_p$ denote the $p$-adic valuation.

%%%%%%%%%%%%%%%
\subsection*{Acknowledgement}
%%%%%%%%%%%%%%%

The first-named author's research is partially supported by the DFG grant: SFB 1085 ``Higher invariants''.
The work on this paper started when the first-named author was visiting  Keio University, Yokohama, Japan. 
During this time she was supported by the Alexander~von~Humboldt-Stiftung and the Japan Society for the Promotion of Science as a JSPS International Research Fellow. 
She would like to thank all members of the KiPAS-AGNT group at Keio University  for providing a pleasant working atmosphere.

The second-named author would like to thank Kohei Yahiro for 
explaining the content of the article \cite{DV_2006} in a seminar a few years ago. 
He also would like to thank Andreas Langer for his comment on the first version of 
the article. The second-named author is partially supported by 
JSPS KAKENHI (Grant Numbers 17K05162, 15H02048, 18H03667 and 18H05233). 
Moreover, a revision of the present article was done during the second-named author's 
stay at IMPAN, which was partially supported by the grant 346300 for IMPAN from the Simons Foundation and the matching 2015-2019 Polish MNiSW fund. He would like to 
thank the members there for the hospitality. 

%%%%%%%%%%%%%%%%%%%%%%%%%
%%%
\section{Overconvergent de Rham--Witt cohomology and Monsky--Washnitzer cohomology}
%%%
%%%%%%%%%%%%%%%%%%%%%%%%%

As we recalled in the introduction, the integral 
overconvergent de Rham--Witt cohomology 
$H^i(X, W^{\dagger}\Omega^{\bullet}_X)$ is defined for any smooth variety $X$ over 
$k$, but it is not so easy to compute it directly for general $X$. 
When $X$ is affine and smooth, there exists a simpler construction 
of the cohomology $H^i_{\mathrm{MW}}(X/W)$ (called the integral Monsky--Washnitzer cohomology), which is due to Monsky and Washnitzer \cite{MW_1968}. 
In this section, we briefly recall the definition of the integral Monsky--Washnitzer 
cohomology and recall the comparison theorem between the integral 
overconvergent de Rham--Witt cohomology and the 
integral Monsky--Washnitzer cohomology. 

Let $X = \Spec(\overline{A})$ be an affine smooth variety over $k$ and 
take a lift ${\mathcal{X}} = \Spec(A)$ of $X$ to an affine smooth scheme over $W$. 
(The existence of such a lift is due to Elkik \cite{E_1973}.) Let $A^{\dagger}$ be 
the weak completion of $A$ (defined by Monsky--Washnitzer), and let 
$\Omega^{\bullet}_{A^{\dagger}}$ be the de Rham complex of continuous differentials 
of $A^{\dagger}$ over $W$. We define the integral Monsky--Washnitzer cohomology
 $H^i_{\mathrm{MW}}(X/W)$ of $X$ by 
\[ H^i_{\mathrm{MW}}(X/W) := H^i(\Omega^{\bullet}_{A^{\dagger}}). \] 
It is known that this definition is independent of the choice of the lift
${\mathcal{X}} = \Spec(A)$ \cite{ES}. 
Then we have the following comparison theorem: 

\begin{theorem}[Davis--Langer--Zink \cite{DLZ_2011}, 
Davis--Zureick-Brown \cite{DZB_2014}, Ertl--Sprang \cite{ES}]\label{thm1.1}
Let $X$ be an affine smooth variety over $k$. Then there exists a canonical 
isomorphism 
$H^i_{\mathrm{MW}}(X/W) \cong H^i(X, W^{\dagger}\Omega^{\bullet}_X)$. 
\end{theorem}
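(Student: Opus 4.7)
My plan is to construct a natural morphism of complexes of $W$-modules
$$\Omega^\bullet_{A^\dagger} \longrightarrow \Omega^\bullet_{W^\dagger(\bar A)},$$
where the right-hand side is the Davis--Langer--Zink overconvergent de Rham--Witt complex evaluated on $\bar A$, and then to show it is a quasi-isomorphism. Combined with a local-to-global identification $H^i(\Omega^\bullet_{W^\dagger(\bar A)}) \cong H^i(X, W^\dagger\Omega^\bullet_X)$ on the affine scheme $X$ (which follows because, by construction, $W^\dagger\Omega^\bullet_X$ is the Zariski sheafification of $\bar A \mapsto \Omega^\bullet_{W^\dagger(\bar A)}$ and has no higher Zariski cohomology on affines), this gives the desired comparison.

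To construct the comparison map, I would use smoothness of $A$ over $W$ to choose a lift of Frobenius $\phi\colon \hat A \to \hat A$ on the $p$-adic completion. The pair $(\hat A,\phi)$ determines a ring homomorphism $\hat A \to W(\bar A)$, characterized by the property that its ghost components are the powers of $\phi$ reduced modulo $p$, and lifting the identity on $\bar A$. The first genuine step is to check that this restricts to a homomorphism $A^\dagger \to W^\dagger(\bar A)$; this is a matter of matching the $p$-adic growth condition defining the weak completion on the left with the Davis--Langer--Zink overconvergence condition on Witt components on the right. The universal property of the de Rham complex then extends the ring map to a morphism of complexes, and a standard explicit chain-homotopy argument shows that the induced map on $H^i$ is independent of the choice of $\phi$.

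The main obstacle is showing that this map is an isomorphism on integral cohomology. Rationally, both sides are known to compute $H^i_{\rig}(X/K)$ (by Monsky--Washnitzer and by Davis--Langer--Zink respectively), so an isomorphism on $H^i \otimes \Q$ is immediate from the compatibility of the comparison map with the specialisation to rigid cohomology. The integral statement is more delicate because, as the introduction already emphasises, both cohomologies can carry large torsion, and dividing by torsion is exactly what one cannot afford here. I would therefore try to establish the stronger statement that $\Omega^\bullet_{A^\dagger} \to \Omega^\bullet_{W^\dagger(\bar A)}$ is a quasi-isomorphism of complexes (not merely after inverting $p$). A natural route is reduction to the polynomial case: for $\bar A = k[T_1,\dots,T_n]$ and $A = W[T_1,\dots,T_n]$, both complexes admit transparent descriptions in terms of Gauss norms on multivariate power series with overconvergent growth, and one can compare them by direct estimate. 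One then extends to general smooth $\bar A$ by \'etale base change, provided one knows that both $A \mapsto A^\dagger$ and $\bar A \mapsto W^\dagger(\bar A)$ behave well under \'etale morphisms of the underlying smooth $k$-algebras.

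Establishing this \'etale base change in a sufficiently precise form — in particular, that the comparison map intertwines \'etale descent on the Monsky--Washnitzer and on the de Rham--Witt sides — is where I would expect the hardest technical work, since it is at this point that the subtle interplay between weak completions and overconvergent Witt vectors has to be controlled integrally. This is the content of the Davis--Zureick-Brown and Ertl--Sprang refinements of the original Davis--Langer--Zink argument, and it is what makes the integral statement available as opposed to only the rational one.
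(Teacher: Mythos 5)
The paper does not actually prove Theorem \ref{thm1.1}: it is imported as a black box from the cited works of Davis--Langer--Zink, Davis--Zureick-Brown and Ertl--Sprang, and only its consequence \eqref{eq1} is used in the sequel. So there is no internal proof to compare against; what can be said is that your outline is a faithful reconstruction of the strategy of those references. The comparison map is indeed built from a Frobenius lift $\phi$ on the weak completion, via the universal map into Witt vectors followed by reduction, after checking that the image lands in the overconvergent subring $W^{\dagger}(\overline{A})$; it is independent of $\phi$ up to chain homotopy; the polynomial case is handled by the Davis--Langer--Zink decomposition of the overconvergent de Rham--Witt complex into an integral part identified with $\Omega^{\bullet}_{A^{\dagger}}$ and an acyclic fractional complement; the general case is reduced to it by controlling both constructions under \'etale maps (the step that Davis--Zureick-Brown carried out only under restrictions on the cohomological degree and that Ertl--Sprang completed in general); and the passage from complexes of global sections to hypercohomology on the affine $X$ uses the vanishing of higher Zariski cohomology of the sheaves $W^{\dagger}\Omega^{j}_X$. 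Your identification of the integral \'etale-base-change step as the technical heart is exactly right.

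Two small imprecisions, neither fatal to the strategy: the target of your comparison map should be the overconvergent de Rham--Witt complex $W^{\dagger}\Omega^{\bullet}_{\overline{A}}$, which is a quotient of the literal de Rham complex $\Omega^{\bullet}_{W^{\dagger}(\overline{A})}$ of the ring of overconvergent Witt vectors rather than that de Rham complex itself; and the map $A^{\dagger} \to W(\overline{A})$ is the composite of the section $A^{\dagger} \to W(A^{\dagger})$ with ghost components $\phi^{n}$ and the projection $W(A^{\dagger}) \to W(\overline{A})$, not a map characterized by ghost components ``reduced modulo $p$''. Also, your remark that the rational isomorphism is ``immediate'' from both sides computing rigid cohomology is true but plays no role in the actual integral argument, which, as you then correctly insist, must establish the quasi-isomorphism at the level of complexes.
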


As in the case of overconvergent de Rham--Witt cohomology, 
we define the integral Monsky--Washnitzer cohomology modulo torsion 
$\overline{H}^i_{\mathrm{MW}}(X/W)$ by 
\[ \overline{H}^i_{\mathrm{MW}}(X/W) := {\mathrm{Im}}
(H^i_{\mathrm{MW}}(X/W) \to H^i_{\mathrm{MW}}(X/W) \otimes \Q). \] 
Then, Theorem \ref{thm1.1} implies the isomorphism 
\begin{equation}\label{eq1}
\overline{H}^i_{\mathrm{MW}}(X/W) \cong 
\overline{H}^i(X, W^{\dagger}\Omega^{\bullet}_X). 
\end{equation}

We next recall a relation between Monsky--Washnitzer cohomology and
algebraic de Rham cohomology. Let $X = \Spec(\overline{A}), 
{\mathcal{X}} = \Spec(A)$ as before Theorem \ref{thm1.1} and let 
$\Omega^{\bullet}_A$ be the de Rham complex of algebraic differentials 
of $A$ over $W$. We define the integral algebraic de Rham cohomology 
$H^i_{\mathrm{dR}}({\mathcal{X}}/W)$ by 
\[ H^i_{\mathrm{dR}}({\mathcal{X}}/W) :=  H^i(\Omega^{\bullet}_{A}), \] 
and the integral algebraic de Rham cohomology modulo torsion by 
\[ \overline{H}^i_{\mathrm{dR}}({\mathcal{X}}/W) := {\mathrm{Im}}
(H^i_{\mathrm{dR}}({\mathcal{X}}/W) \to H^i_{\mathrm{dR}}({\mathcal{X}}/W) \otimes \Q). \]

The canonical map of weak completion $\iota: A \to A^{\dagger}$ induces a morphism
\[ \iota_*: H^i_{\mathrm{dR}}({\mathcal{X}}/W) \to H^i_{\mathrm{MW}}(X/W), \] 
hence the commutative diagram with injective vertical arrows: 
\begin{equation}\label{eq2}
\xymatrix{
\overline{H}^i_{\mathrm{dR}}({\mathcal{X}}/W) \ar@{^{(}->}[d] \ar[r]^{\overline{\iota}_*} & 
 \overline{H}^i_{\mathrm{MW}}(X/W) \ar@{^{(}->}[d] \\ 
H^i_{\mathrm{dR}}({\mathcal{X}}/W) \otimes \Q \ar[r]^{\iota_{*,\Q}} & 
H^i_{\mathrm{MW}}(X/W) \otimes \Q. 
}
\end{equation}
In general, it is not necessarily true that $\iota_{*,\Q}$ is an isomorphism. 

%%%%%%%%%%%%%%%%%%%%%%%%%
%%%
\section{$C_{ab}$-curves}
%%%
%%%%%%%%%%%%%%%%%%%%%%%%%

In this section, we give a review of the result of Denef--Vercauteren \cite[\S 3]{DV_2006}
on the computation of rational algebraic de Rham cohomology and 
rational Monsky--Washnitzer cohomology of $C_{ab}$-curves. 
(This is a generalization of the computation by Kedlaya \cite{K_2001} in the case of 
hyperelliptic curves.) 

First we recall the definition of $C_{ab}$-curves. 

\begin{definition}
Let $a, b$ be coprime positive integers and 
let $L$ be a field of characteristic prime to $ab$. 
A $C_{ab}$-curve over $L$ is an affine smooth plane curve $X$ over $L$ 
defined by an equation of the form 
\begin{equation}\label{eq3}
\overline{f}(x, y) := y^a + \sum_{j=1}^{a-1} \overline{f}_j(x) y^j + \overline{f}_0(x) = 0, 
\end{equation}
where $\overline{f}_j(x) \in L[x]$ for $0 \leq j \leq a-1$ with 
$\deg \overline{f}_0 = b$ and $a \deg \overline{f}_j + b j < ab$ for 
$1 \leq j \leq a-1$. 
\end{definition}

\begin{remark}
\begin{enumerate}
\item
In some references, the smooth compactification of $X$ is called  
a $C_{ab}$-curve. However, we adopt the above definition because 
we will not use the compactification so much. 
\item
When $\overline{f}_j(x) = 0$ for all $1 \leq j \leq a-1$, the curve $X$ is called 
a superelliptic curve (minus one point). 
When we assume moreover that $a=2$, the curve 
$X$ is called a hyperelliptic curve (minus one point of characteristic prime to 
$2b$). When we assume moreover that $b=3$, the curve 
$X$ is called an elliptic curve (minus one point of characteristic prime to 
$6$). 
\item
The smoothness assumption on $X$ is nothing but the Jacobian criterion 
associated to the equation \eqref{eq3}. In the case of superelliptic curves, 
$X$ is smooth if and only if $\overline{f}_0(x)$ does not have multiple roots in the 
algebraic closure of $L$. 
\end{enumerate}
\end{remark}

Then the following facts are known: 

\begin{fact}\label{fact1}{\rm \cite[p.81]{DV_2006}, \cite{M}}
\begin{enumerate}
\item
There exists a unique $L$-rational point at infinity (the point in the smooth compactification 
of $X$ which is not in $X$), which we denote by $P_{\infty}$. 
\item
$\ord_{P_{\infty}}(x) = -a, \,\, \ord_{P_{\infty}}(y) = -b$. 
\item
The genus of the smooth compactification of $X$ is equal to $(a-1)(b-1)/2$. 
\end{enumerate}
\end{fact}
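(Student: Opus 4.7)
The plan is to establish (i)--(ii) simultaneously by a Newton polygon analysis of $\overline{f}(x,y)$ at the infinite place of $L(x)$, and then derive (iii) by reading off the pole-order structure at $P_\infty$ and invoking Riemann--Roch.

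For (i) and (ii), I would regard $\overline{f}(x,y)$ as a monic polynomial of degree $a$ in $y$ with coefficients in $L[x]$, and compute its Newton polygon relative to the place $v_\infty$ of $L(x)$ at infinity, normalized by $v_\infty(x)=-1$. The coefficient of $y^j$ has $v_\infty$-value $-\deg\overline{f}_j$ for $1\le j\le a-1$, while $\overline{f}_0$ has value $-b$ and the leading coefficient of $y^a$ has value $0$. The hypothesis $a\deg\overline{f}_j+bj<ab$ says precisely that each point $(j,-\deg\overline{f}_j)$ lies strictly above the segment from $(0,-b)$ to $(a,0)$; hence the Newton polygon is a single segment of slope $b/a$, primitive since $\gcd(a,b)=1$. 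The structure theorem for Newton polygons over the completion $\widehat{L(x)}_{v_\infty}$ (whose residue field is $L$) then forces $\overline{f}$ to remain irreducible upon completion, and the resulting extension to be totally ramified of degree $a$ with residue field $L$. This yields a unique $L$-rational place $P_\infty$ of $L(X)=\Frac(\overline{A})$ above $v_\infty$, with $\ord_{P_\infty}(x)=a\cdot v_\infty(x)=-a$, which is (i) together with the first equality in (ii). The second equality of (ii) comes from the same polygon: every root $y$ of $\overline{f}$ has $v_\infty$-value $-b/a$, so $\ord_{P_\infty}(y)=a\cdot(-b/a)=-b$.

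For (iii), the key is that since $\overline{f}$ is monic of degree $a$ in $y$, the ring $\overline{A}$ is a free $L[x]$-module with basis $1,y,\ldots,y^{a-1}$, so the monomials $\{x^iy^j:i\ge 0,\,0\le j\le a-1\}$ form an $L$-basis of $\overline{A}$. By (ii) combined with $\gcd(a,b)=1$ these monomials have pairwise distinct pole orders $ai+bj$ at $P_\infty$, and the set of pole orders realized is precisely the numerical semigroup $\langle a,b\rangle\subset\mathbb{Z}_{\ge 0}$. Denoting by $C$ the smooth compactification of $X$, part (i) yields $C\setminus X=\{P_\infty\}$, so $\overline{A}=\bigcup_n H^0(C,\mathcal{O}_C(nP_\infty))$, and for every sufficiently large $n$
\[
\dim_L H^0(C,\mathcal{O}_C(nP_\infty))=\#\{s\in\langle a,b\rangle:s\le n\}=n+1-\#(\mathbb{Z}_{>0}\setminus\langle a,b\rangle).
\]
By the Sylvester--Frobenius theorem the number of gaps of $\langle a,b\rangle$ equals $(a-1)(b-1)/2$; comparison with the Riemann--Roch formula $\dim H^0(C,\mathcal{O}_C(nP_\infty))=n+1-g$ for $n\gg 0$ then yields $g=(a-1)(b-1)/2$.

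The point that requires care is translating the Newton polygon analysis into an actual description of $P_\infty$ on the smooth compactification $C$: one needs that the normalization of the projective closure of $X$ really is smooth (automatic in dimension one since normality equals smoothness for curves over a perfect field) and that the unique extension of $v_\infty$ to $L(X)$ produced by the Newton polygon corresponds to the unique point of $C\setminus X$. Both follow from the smoothness hypothesis on $X$ together with the irreducibility of $\overline{f}$ over $\widehat{L(x)}_{v_\infty}$ established above. All remaining ingredients are classical: the Newton polygon factorization over a complete discretely valued field, Sylvester's formula for the Frobenius number of $\langle a,b\rangle$, and Riemann--Roch for curves.
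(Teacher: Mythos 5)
Your argument is correct. Note that the paper does not prove this statement at all: it is recorded as a \emph{Fact} with citations to \cite{DV_2006} and \cite{M}, so there is no internal proof to compare against. What you give is the standard argument from that literature, and it is complete: the hypothesis $a\deg\overline{f}_j+bj<ab$ places the points $(j,-\deg\overline{f}_j)$ strictly above the segment from $(0,-b)$ to $(a,0)$, so the Newton polygon at $v_\infty$ is a single segment of slope $b/a$ with no interior lattice points (as $\gcd(a,b)=1$), forcing irreducibility over the completion and a unique, totally ramified, residue-trivial place above $v_\infty$; this gives (i) and (ii). For (iii), the two small points you should make explicit are (a) that smoothness of $X$ makes $\overline{A}$ normal and hence equal to the integral closure of $L[x]$ in $L(X)$, which is what identifies $X$ with $C\setminus\{P_\infty\}$ and gives $\overline{A}=\bigcup_n H^0(C,\mathcal{O}_C(nP_\infty))$, and (b) that since the monomials $x^iy^j$ ($0\le j\le a-1$) have pairwise distinct pole orders $ai+bj$, no cancellation of leading terms can occur, so $\dim_L H^0(C,\mathcal{O}_C(nP_\infty))$ really is the number of semigroup elements $\le n$ (and the reduction $j=qa+j'$ shows these pole orders exhaust $\langle a,b\rangle$). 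Both are easy and you flag them; with Sylvester's gap count $(a-1)(b-1)/2$ and Riemann--Roch the genus formula follows.
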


Now let $a, b$ be coprime positive integers prime to $p$ and 
consider a $C_{ab}$-curve $X$ over $k$ defined by the 
equation \eqref{eq3} (with $L$ replaced by $k$). 

For each $0 \leq j \leq a$, we take a  lift $f_j(x) \in W[x]$ of 
$\overline{f}_j(x)$ with $\deg f_j = \deg \overline{f}_j$.  
By definition, we can write explicitly that 
\begin{equation}\label{eqcoef}
f_0(x) = \sum_{i=0}^{b}c_{i0}x^i, \quad 
f_j(x) = \sum_{ai+bj < ab}c_{ij}x^i \,\,(1 \leq j \leq a-1)
\end{equation} 
with $c_{i0}, c_{ij} \in W$. Then we have the following: 

\begin{lemma}\label{lemsm}
The equation 
\begin{equation}\label{eq4}
f(x, y) := y^a + \sum_{j=1}^{a-1} f_j(x) y^j + f_0(x) = 0 
\end{equation}
defines a smooth lift ${\mathcal{X}}$ of $X$ over $W$. 
\end{lemma}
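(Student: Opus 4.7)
My plan is to verify three things: that ${\mathcal{X}}$ has $X$ as its special fibre, that ${\mathcal{X}}$ is flat over $W$, and that ${\mathcal{X}}$ is smooth over $W$. The first holds by construction, since $f \equiv \overline{f} \pmod p$. For flatness, $f$ is monic of degree $a$ in $y$, so $W[x,y]/(f)$ is a free $W[x]$-module of rank $a$ and hence flat over $W$. With flatness in hand, the fibrewise criterion reduces smoothness of ${\mathcal{X}}$ over $W$ to smoothness of each geometric fibre; the special fibre is $X$, smooth by assumption, so the substantive step is to prove that the generic fibre ${\mathcal{X}}_K = \Spec K[x,y]/(f)$ is smooth.

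To handle the generic fibre, I would suppose for contradiction that $(x_0, y_0) \in \overline{K}^2$ satisfies $f(x_0, y_0) = \partial_x f(x_0, y_0) = \partial_y f(x_0, y_0) = 0$. Let $K' = K(x_0, y_0)$, let $W'$ be its ring of integers, and let $\nu$ be the valuation on $K'$ extending the one on $K$. If $\nu(x_0) \geq 0$ then $f_j(x_0) \in W'$ for all $j$, so $f(x_0, y_0) = 0$ exhibits $y_0$ as integral over $W'$; since $W'$ is a discrete valuation ring, hence integrally closed in $K'$, we get $y_0 \in W'$. Reducing $(x_0, y_0)$ modulo the maximal ideal of $W'$ then yields a common zero of $\overline{f}$, $\partial_x \overline{f}$, and $\partial_y \overline{f}$ over the residue field, contradicting the smoothness of $X$.

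The remaining case $\nu(x_0) = -s < 0$ I would dispatch by a valuation estimate using the $C_{ab}$ degree condition. Because $\deg f_j = \deg \overline{f}_j$, the leading coefficient of each $f_j$ is a unit in $W$, so $\nu(f_j(x_0)) = -s \deg f_j$ and in particular $\nu(f_0(x_0)) = -sb$. Weighing the terms of $f(x_0, y_0) = 0$ against each other, the inequality $a \deg f_j + bj < ab$ for $1 \leq j \leq a-1$ forces the dominant terms to be $y_0^a$ and $c_{b0} x_0^b$, pinning $\nu(y_0) = -sb/a$. Applying the same inequality to
\begin{equation*}
\partial_y f(x_0, y_0) = a y_0^{a-1} + \sum_{j=1}^{a-1} j f_j(x_0) y_0^{j-1},
\end{equation*}
and using $\nu(a) = 0$ since $a$ is coprime to $p$, each subordinate term $j f_j(x_0) y_0^{j-1}$ has valuation $\geq -(s/a)(a \deg f_j + (j-1)b) > -(a-1)sb/a = \nu(a y_0^{a-1})$. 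So $a y_0^{a-1}$ is the unique minimiser of valuation in $\partial_y f(x_0, y_0)$ and cannot cancel, contradicting $\partial_y f(x_0, y_0) = 0$. The main obstacle is organising the valuation bookkeeping cleanly; once $\nu(y_0) = -sb/a$ is pinned down from the equation of the curve, the $C_{ab}$ condition transparently forces the leading term in $\partial_y f$ to dominate.
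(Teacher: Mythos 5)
Your proof is correct, but it takes a genuinely different route from the paper's. The paper substitutes $x \mapsto X^a$, $y \mapsto Y^b$ into $f$, $\frac{\partial f}{\partial x}$, $\frac{\partial f}{\partial y}$, homogenizes, and checks that the reduced system has no common zero in ${\mathbf{P}}^2(k^{\rm alg})$ (the affine part by smoothness of $X$, the part at infinity by the explicit leading terms $Y^{ab}+X^{ab}$, $bX^{a(b-1)}$, $aY^{(a-1)b}$); it then invokes the effective Nullstellensatz over $W$ of \cite[Theorem 2]{DV_2006} to produce $g_0 f + g_1 \frac{\partial f}{\partial x} + g_2 \frac{\partial f}{\partial y} = 1$ in $W[x,y]$ and applies the Jacobian criterion directly over $W$. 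You instead prove flatness from monicity in $y$, reduce via the fibrewise criterion to smoothness of the two fibres, and treat the generic fibre by a Newton--polygon argument at a hypothetical singular point: if $\nu(x_0)\geq 0$ the point reduces to a singular point of $X$, and if $\nu(x_0)=-s<0$ the $C_{ab}$ inequality $a\deg f_j + bj < ab$ first pins $\nu(y_0)=-sb/a$ and then makes $ay_0^{a-1}$ the strict valuation minimum in $\partial_y f(x_0,y_0)$ (here $\nu(a)=0$ is where coprimality of $a$ to $p$ enters, and only $\nu(j)\geq 0$ is needed for the subordinate terms). I checked the bookkeeping and it closes: the required inequality $a\deg f_j + (j-1)b < (a-1)b$ is exactly the $C_{ab}$ condition. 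Your approach is more elementary and self-contained, avoiding the citation to the effective Nullstellensatz at the cost of a case division and the valuation estimates; the paper's approach is shorter given that citation and yields the unit Jacobian ideal over $W$ explicitly. Both arguments ultimately rest on the same two inputs, namely the smoothness of $X$ and the control at infinity provided by the $C_{ab}$ degree conditions.
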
 

\begin{proof}
This is implicitly proven in \cite[\S 3]{DV_2006}, 
but we sketch the argument for the convenience of the reader. 
Let 
$F_0 = f(X^a, Y^b), F_1 = \frac{\partial f}{\partial x}(X^a, Y^b), 
F_2 = \frac{\partial f}{\partial y}(X^a, Y^b)$ 
and let $F_i^{\rm h}(X,Y,Z) \, (i=0,1,2)$ be the homogenization of $F_i$. 

Let $\overline{F}_i^{\rm h}\, (i=0,1,2)$ be the reduction modulo $p$ of $F_i^{\rm h}$. 
Then the equation $\overline{F}_0^{\rm h} = \overline{F}_1^{\rm h} 
= \overline{F}_2^{\rm h} = 0$ has no solution in the projective space 
${\mathbf{P}}^2(k^{\rm alg})$: Indeed, when $Z \not= 0$, this follows from 
the smoothness of the curve $X$ over $k$ and when $Z = 0$, the above equality becomes 
$$ Y^{ab} + X^{ab} = bX^{a(b-1)} = aY^{(a-1)b} = 0, $$
which has no nontrivial solution. 

Then, \cite[Theorem 2]{DV_2006} implies that there exist $G_0, G_1, G_2 \in W[X,Y]$ with 
$\sum_{i=0}^2 G_iF_i = 1$. We may assume that $G_0, G_1, G_2$ are 
linear combinations of the monomials $X^{ai}Y^{bj}$ $(i,j \in \N)$ because 
the equality $\sum_{i=0}^2 G_iF_i = 1$ remains true when 
we discard all the other monomials from $G_0, G_1, G_2$. 
Then, if we set 
$g_i(x,y) \in W[x,y]$ $(i=0,1,2)$ so that $g_i(X^a,Y^b) = G_i(X,Y)$, 
we see the equality 
$ g_0 f + g_1 \frac{\partial f}{\partial x} + g_2 \frac{\partial f}{\partial y} = 1$. 
So ${\mathcal{X}}$ is smooth over $W$ by the Jacobian criterion. 
\end{proof}

Let ${\mathcal{X}}_K$ be the generic fiber of ${\mathcal{X}}/W$, 
which is a $C_{ab}$-curve over $K$. Let $P_{\infty}$ be the 
point at infinity of ${\mathcal{X}}_K$. (See Fact \ref{fact1}(i).) 

We have $X = \Spec (\overline{A})$, ${\mathcal{X}} = \Spec (A)$ with 
\[ \overline{A} = k[x,y]/(\overline{f}(x,y)), \quad 
A = W[x,y]/(f(x,y)). \] 
Let $A^{\dagger}$ be the weak completion of $A$. 
By definition given in the previous section, we have the first cohomologies 
\[ 
H^1_{\mathrm{dR}}({\mathcal{X}}/W) = H^1(\Omega^{\bullet}_A), 
\quad H^1_{\mathrm{MW}}(X/W) = H^1(\Omega^{\bullet}_{A^{\dagger}}) \] 
which induce the diagram \eqref{eq2} (with $i=1$). 

Denef--Vercauteren first compute a basis of the 
first rational algebraic de Rham cohomology 
\begin{equation}\label{eq5}
H^1_{\mathrm{dR}}({\mathcal X}/W) \otimes \Q 
= H^1(\Omega^{\bullet}_{A} \otimes \Q).   
\end{equation}
To explain their result and its proof, for 
$\omega \in \Omega^{\bullet}_{A} \otimes \Q$, we denote its 
cohomology class in the groups \eqref{eq5} by $[\omega]$. 

\begin{proposition}[{\cite[p.89]{DV_2006}}]\label{DV} 
The elements $[x^iy^jdx]$  $(0 \leq i \leq b-2, 1 \leq j \leq a-1)$ form 
a basis of $H^1_{\mathrm{dR}}({\mathcal X}/W) \otimes \Q 
= H^1(\Omega^{\bullet}_{A} \otimes \Q)$ over $K$.
\end{proposition}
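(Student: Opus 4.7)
My plan is to compute $H^1(\Omega^\bullet_A \otimes \Q)$ directly from the presentation $A \otimes \Q = K[x,y]/(f(x,y))$, which is a free $K[x]$-module of rank $a$ with basis $1, y, \ldots, y^{a-1}$, by reducing every cohomology class to the claimed normal form modulo exact forms. Any $1$-form is represented by $P(x,y)\,dx + Q(x,y)\,dy$ with $P, Q$ of $y$-degree less than $a$. I proceed in two phases: first eliminate the $dy$-component, then reduce the $x$-degrees in the remaining $dx$-component.

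For the $dy$-elimination, when $0 \le j \le a-2$ the identity $d(g(x)y^{j+1}/(j+1)) = g'(x)y^{j+1}/(j+1)\,dx + g(x)y^j\,dy$ yields $g(x)y^j\,dy \equiv -g'(x)y^{j+1}/(j+1)\,dx$ modulo exact forms. For $j = a-1$, I differentiate $g(x) y^a / a$ and substitute $y^a = -\sum_{k=0}^{a-1} f_k(x) y^k$ from \eqref{eq4} to obtain an analogous relation rewriting $g(x) y^{a-1}\,dy$ as a combination of $x^i y^k\,dx$ with $k \le a-1$. After this phase every class has a representative $\sum_{j=0}^{a-1} P_j(x)\,y^j\,dx$. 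Since $x^i\,dx = d(x^{i+1}/(i+1))$ is exact, the $j = 0$ summand vanishes in cohomology. For $1 \le j \le a-1$, I obtain further relations by computing $d(x^N y^s)$ for suitable $s$, applying the $dy$-elimination step to the resulting $y^{s-1}\,dy$ piece (invoking the curve equation whenever $s = a$); these relations allow me to reduce the degree of each $P_j(x)$ iteratively down to $b-2$.

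To conclude, I count dimensions. The proposed spanning set has cardinality $(a-1)(b-1)$. By Fact \ref{fact1}(iii) the smooth compactification $\overline{\mathcal X}_K$ has genus $g = (a-1)(b-1)/2$, and by Fact \ref{fact1}(i) the curve ${\mathcal X}_K$ is obtained from $\overline{\mathcal X}_K$ by removing the single point $P_\infty$. The Gysin localization sequence in algebraic de Rham cohomology then yields $\dim_K H^1_{\mathrm{dR}}({\mathcal X}_K/K) = 2g = (a-1)(b-1)$, matching the cardinality of the spanning set, which must therefore be a basis over $K$.

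The main obstacle I anticipate is proving that the $x$-degree reduction in the second phase terminates. The naive relation from $d(x^N y^{j+1})$ is tautological, so one must combine it with the curve equation to extract a nontrivial relation, and then show that a suitable weight on monomials strictly decreases on the leading term. The natural choice is the pole order at $P_\infty$, which by Fact \ref{fact1}(ii) equals $ai+bj$ on the monomial $x^i y^j$; the degree hypotheses $\deg \overline{f}_0 = b$ and $a\deg \overline{f}_j + bj < ab$ defining a $C_{ab}$-curve are precisely what ensure that after substitution the leading weight of each reduction relation exceeds every other weight occurring in it, so that the procedure decreases the weight monotonically and halts after finitely many steps.
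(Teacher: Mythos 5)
Your proposal follows essentially the same route as the paper's proof (which is Denef--Vercauteren's argument): eliminate the $dy$-forms via exact differentials together with the curve equation, derive degree-lowering relations in the $dx$-forms by combining $d(x^ly^{a+j})$ with \eqref{eq4} (this is exactly the relation \eqref{eq6}), use the pole order at $P_\infty$ as the strictly decreasing weight that guarantees termination, and conclude by the genus count $2g=(a-1)(b-1)$. The one point to make explicit is that two terms of the relation --- the one from $x^lf_0'(x)y^j\,dx$ and the one from $-\tfrac{la}{a+j}x^{l-1}y^{a+j}\,dx$ after substituting \eqref{eq4} --- share the maximal pole order, so rather than a unique dominant monomial one must check that the combined leading coefficient $\bigl(b+\tfrac{la}{a+j}\bigr)c_{b0}$ is nonzero, as the paper does.
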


We give a sketch of the proof because it is important for us. 

\begin{proof}
The proof is done in several steps. 

\medskip 

\noindent
{\textbf{Step 1.}} \, 
By definition, the group $H^1(\Omega^{\bullet}_{A} \otimes \Q)$ is generated by 
$[x^iy^jdx], [x^iy^jdy]$  $(i, j \in \N)$ over $K$. 
Using the defining equation \eqref{eq4}, we see that the group $H^1(\Omega^{\bullet}_{A} \otimes \Q)$ 
is generated by $[x^iy^jdx], [x^iy^jdy]$  $(i \in \N, 0 \leq j \leq a-1)$ over $K$. 

\medskip 

\noindent
{\textbf{Step 2.}} \, 
Next, using the equality 
\begin{equation}\label{eq:revise}
0 = [d(x^iy^{j})] = j[x^iy^{j-1}dy] + i[x^{i-1}y^{j}dx]  
\end{equation}
and the defining equation \eqref{eq4} again if necessary, we see that 
the group $H^1(\Omega^{\bullet}_{A} \otimes \Q)$ 
is generated by $[x^iy^jdx]$  $(i \in \N, 1 \leq j \leq a-1)$ over $K$. 

\medskip 

\noindent
{\textbf{Step 3.}} \, For each $j, l \in \N$, 
Denef--Vercauteren prove the following equality \cite[(18) in p.89]{DV_2006}: 
\begin{equation}\label{eq6}
\left[ 
x^l \left( \sum_{k=1}^{a-1} \frac{j}{k+j} f'_k(x) y^k + f'_0(x) \right) y^j dx 
- 
lx^{l-1}\left( \frac{a}{a+j} y^a + \sum_{k=1}^{a-1} \frac{k}{k+j} f_k(x) y^k 
\right) y^j dx 
\right] = 0. 
\end{equation}
We compute the order at $P_{\infty}$ of terms in the equality \eqref{eq6}, 
noting that $\ord_{P_{\infty}}(x) = -a, \ord_{P_{\infty}}(y) = -b, 
\ord_{P_{\infty}}(dx) = -(a+1)$. (See Fact \ref{fact1}(ii).) 
The possible terms with lowest order are 
$\omega_1 = x^lf'_0(x)y^jdx$ and $\omega_2 = - lx^{l-1} \frac{a}{a+j} y^{a+j}dx$ 
and the order is $-(a(l+b) + jb + 1)$. 
Because 
\begin{align*}
& \omega_1 = b c_{b0} x^{l+b-1}y^j dx + \text{(forms of higher order)}, \\  
& \omega_2 = \frac{la}{a+j} c_{b0} x^{l+b-1}y^jdx + \text{(forms of higher order)}
\end{align*}
(we used \eqref{eq4} for $\omega_2$), we conclude that the differential form 
inside the bracket $[ \phantom{a} ]$ in \eqref{eq6} has the form 
\[ \left( b + \frac{la}{a+j} \right) c_{b0} x^{l+b-1}y^j dx + \text{(forms of higher order)}. \]
Because the coefficient $(b + \frac{la}{a+j})c_{b0}$ is nonzero, 
the order of the differential form 
inside the bracket $[ \phantom{a} ]$ in \eqref{eq6} is equal to $-(a(l+b) + jb + 1)$. 

Now we go back to consider $[x^iy^jdx]$  $(i \in \N, 1 \leq j \leq a-1)$. 
The order at $P_{\infty}$ of $x^iy^jdx$ is $-(a(i+1)+jb+1)$. So, if $i \geq b-1$, 
we can use \eqref{eq6}, \eqref{eq4} and \eqref{eq:revise} to rewrite $[x^iy^jdx]$ 
as a linear combination over $K$ of the elements $[x^{i'}y^{j'}dx]$  $(i' \in \N, 1 \leq j' \leq a-1)$
such that the order at $P_{\infty}$ of $x^{i'}y^{j'}dx$ is strictly larger than 
that of $x^iy^jdx$. If $i' \geq b-1$ for some $[x^{i'}y^{j'}dx]$ appearing in the linear 
combination, we use again \eqref{eq6}, \eqref{eq4} and \eqref{eq:revise} to rewrite this term. 
We repeat this process as long as there is a term 
$[x^{i'}y^{j'}dx]$ with $i' \geq b-1$ in the linear combination. 
Because the order at $P_{\infty}$ is always an integer, this process stops at some point 
and so we conclude that, for $i \geq b-1$ and $1 \leq j \leq a-1$, 
$[x^iy^jdx]$ is written as a linear combination of 
the elements $[x^{i'}y^{j'}dx]$ with $0 \leq i' \leq b-2, 1 \leq j' \leq a-1$. 
Hence 
the group $H^1(\Omega^{\bullet}_{A} \otimes \Q)$ 
is generated by $[x^iy^jdx]$  $(0 \leq i \leq b-2, 1 \leq j \leq a-1)$ over $K$. 
Because the genus of the smooth compactification of $X$ is $(a-1)(b-1)/2$ (see Fact \ref{fact1}(iii)), 
we see that the elements $[x^iy^jdx]$  $(0 \leq i \leq b-2, 1 \leq j \leq a-1)$
form a basis of $H^1(\Omega^{\bullet}_{A} \otimes \Q)$. 
\end{proof}

\begin{remark}\label{remDV}
Let $i,j \in \N$. 
By Proposition \ref{DV}, $[x^iy^jdx]$ is written uniquely in the form 
\begin{equation}\label{eq8}
[x^iy^jdx] = \sum_{0 \leq i' \leq b-2 \atop 1 \leq j' \leq a-1} g^{i,j}_{i',j'} [x^{i'}y^{j'}dx] 
\end{equation} 
with $g^{i,j}_{i',j'} \in K$. By looking at the proof of Proposition \ref{DV}, we see that 
each $g^{i,j}_{i',j'}$ is a polynomial function in the coefficients $c_{st}  (s, t \geq 0, as + bt < ab \text{ or } (s,t) = (b,0))$ 
(see \eqref{eqcoef}) appearing in the defining equation \eqref{eq4} of ${\mathcal{X}}$, 
divided by some power of $c_{b0}$: 
Namely, there exist $G^{i,j}_{i',j'} \in \Q[z_{st}]_{s, t \geq 0, as + bt < ab \text{ or } (s,t) = (b,0)}[z_{b0}^{-1}]$ 
for any $i,j,i',j' \in \N, 0 \leq i' \leq b-2, 1 \leq j' \leq a-1$ 
such that $g^{i,j}_{i',j'}$ is the value of $G^{i,j}_{i',j'}$ at $z_{st} = c_{st}$. 
\end{remark}

Next Denef--Vercauteren compute a basis of the 
first rational Monsky--Washnitzer cohomology 
\begin{equation}\label{eq9}
H^1_{\mathrm{MW}}(X/W) \otimes \Q 
= H^1(\Omega^{\bullet}_{A^{\dagger}} \otimes \Q).  
\end{equation}
Following the previous notation, for 
$\omega \in \Omega^{\bullet}_{A^{\dagger}} \otimes \Q$, we denote its 
cohomology class in the groups \eqref{eq9} by $[\omega]$. 

\begin{proposition}[{\cite[p.90--93]{DV_2006}}]\label{DV2} 
The elements $[x^iy^jdx]$  $(0 \leq i \leq b-2, 1 \leq j \leq a-1)$ form 
a basis of 
$H^1_{\mathrm{MW}}(X/W) \otimes \Q 
= H^1(\Omega^{\bullet}_{A^{\dagger}} \otimes \Q)$ 
over $K$.
\end{proposition}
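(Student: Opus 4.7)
The plan is to replay the three-step reduction of Proposition \ref{DV} with $A^{\dagger}$ in place of $A$, and to verify that the resulting infinite sums converge in the overconvergent topology. A generic 1-form $\omega \in \Omega^1_{A^{\dagger}} \otimes \Q$ admits a representation $\omega = \sum_{i,j} \alpha_{ij} x^iy^j dx + \sum_{i,j} \beta_{ij} x^iy^j dy$ with $\alpha_{ij}, \beta_{ij} \in K$ satisfying an overconvergence estimate $\nu_p(\alpha_{ij}), \nu_p(\beta_{ij}) \geq \delta i - M$ for some $\delta > 0$. Steps 1 and 2 from the proof of Proposition \ref{DV} (using the defining equation \eqref{eq4} and the exactness identity \eqref{eq:revise}) are $K$-linear on monomials and apply termwise, reducing $[\omega]$ to the class of $\sum_{i \geq 0,\, 1 \leq j \leq a-1} \gamma_{ij} x^iy^j dx$ with coefficients still satisfying a (possibly weaker) overconvergence estimate. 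Step 3 then applies the identity \eqref{eq6} iteratively to each term with $i \geq b-1$, formally producing
\[
[\omega] = \sum_{0 \leq i' \leq b-2,\, 1 \leq j' \leq a-1} \Bigl( \sum_{i,j} \gamma_{ij}\, g^{i,j}_{i',j'} \Bigr) [x^{i'}y^{j'}dx],
\]
with $g^{i,j}_{i',j'}$ the coefficients described in Remark \ref{remDV}.

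The main obstacle is to verify that the inner sums converge $p$-adically and that the primitive realizing this identity lies in $A^{\dagger} \otimes \Q$ rather than only in a formal completion. Each iteration of \eqref{eq6} introduces a denominator built from factors of the form $(b(a+j) + la)\,c_{b0}$; since $p \nmid ab$ and $c_{b0} \in W^{\times}$ (because $\deg \overline{f}_0 = b$), each such factor is of controlled $p$-adic size. A Legendre-type accounting of the cumulative $p$-adic valuation over the $O(i)$ iterations needed to reduce $x^iy^j dx$ to the basis range should yield a linear bound $\nu_p(g^{i,j}_{i',j'}) \geq -Ci - C'$ for constants $C, C'$ depending only on $a, b, p$. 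An analogous estimate on the intermediate primitives then shows, provided the overconvergence parameter $\delta$ is taken large enough (equivalently, by working in a sufficiently fine overconvergent subring of $A^{\dagger}$), that the inner sums and the implicit primitives both converge in $A^{\dagger} \otimes \Q$. This establishes that the classes $[x^{i'}y^{j'}dx]$ span $H^1_{\mathrm{MW}}(X/W) \otimes \Q$.

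Linear independence then follows from a dimension count. By the standard comparison of Monsky--Washnitzer cohomology with rigid cohomology, $H^1_{\mathrm{MW}}(X/W) \otimes \Q \cong H^1_{\mathrm{rig}}(X/K)$, and the well-known dimension formula for rigid cohomology of an affine smooth curve gives $\dim_K H^1_{\mathrm{rig}}(X/K) = 2g + s - 1 = (a-1)(b-1)$, where $g = (a-1)(b-1)/2$ (Fact \ref{fact1}(iii)) and $s = 1$ (Fact \ref{fact1}(i)). Since we have exhibited exactly $(a-1)(b-1)$ spanning elements, they must be $K$-linearly independent, and hence form a basis.
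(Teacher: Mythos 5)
The paper does not actually prove this proposition---it explicitly omits the proof and cites Denef--Vercauteren---so the relevant comparison is with the argument of \cite[pp.~90--93]{DV_2006}, which your proposal correctly identifies in outline: rerun the reduction of Proposition \ref{DV} termwise on an overconvergent form and control the $p$-adic loss of precision. The independence half of your argument (comparison with rigid cohomology plus the dimension formula $2g+s-1=(a-1)(b-1)$) is fine.

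The gap is in the quantitative estimate that is the entire technical content of the spanning half. You claim a bound $\nu_p(g^{i,j}_{i',j'})\geq -Ci-C'$ and then propose to absorb it by ``taking the overconvergence parameter $\delta$ large enough.'' Neither half of this works. A linear loss is useless: a class in $H^1(\Omega^{\bullet}_{A^{\dagger}}\otimes\Q)$ is represented by a form whose coefficients satisfy $\nu_p(\gamma_{ij})\geq\delta i-M$ for \emph{some} $\delta>0$, and $\delta$ may be smaller than your $C$, in which case $\sum_{i}\gamma_{ij}g^{i,j}_{i',j'}$ diverges. And you cannot enlarge $\delta$: the weak completion $A^{\dagger}$ is the union over all $\delta>0$ of the corresponding subrings, an individual element does not admit a representative with improved decay, so restricting to a ``sufficiently fine overconvergent subring'' only proves spanning for a proper subspace of the cohomology. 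What is actually needed, and what Denef--Vercauteren prove, is a \emph{sublinear} (in fact logarithmic) bound $\nu_p(g^{i,j}_{i',j'})\geq -C\log_p(i)-C'$, which makes $\delta i - C\log_p(i)\to\infty$ for every $\delta>0$. This is not a cosmetic improvement of your estimate but the crux of the proof; note that the same phenomenon appears in the paper's own computation in the proof of Theorem \ref{main1}, where the net valuation of the product in \eqref{eq11} is controlled by observing that $|P_{M'}|-|Q_{M'}|$ is bounded between $-1$ and $0$ and that only $M'\leq M\approx\log_p(Nab)$ contribute---a cancellation between numerator and denominator that a crude per-factor accounting of the denominators $la+jb+ab$ (each of which can individually be divisible by a large power of $p$) does not see. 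The same logarithmic control is needed for the primitives witnessing the exactness relations, to ensure they lie in $A^{\dagger}\otimes\Q$. As written, your argument establishes spanning only for classes represented by forms with decay rate $\delta>C$, which is strictly weaker than the proposition.
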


We omit the proof of Proposition \ref{DV2} because it is not necessary for us. 

\begin{corollary}
For a $C_{ab}$-curve $X$, the map $\iota_{*,\Q}$ in the diagram \eqref{eq2} 
is an isomorphism. 
\end{corollary}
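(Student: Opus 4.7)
The plan is to read off the corollary directly from Propositions \ref{DV} and \ref{DV2}, handling the remaining cohomological degrees by trivial considerations. The map $\iota_{*,\Q}$ is induced on cohomology by the inclusion of complexes $\Omega^{\bullet}_A \otimes \Q \hookrightarrow \Omega^{\bullet}_{A^{\dagger}} \otimes \Q$ coming from the weak completion map $A \hookrightarrow A^{\dagger}$; in particular it sends a class represented by a polynomial differential $x^r y^s dx$ on the source to the class of the very same differential on the target.

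The only nontrivial degree is $i=1$. There, Proposition \ref{DV} exhibits the family
\[
\bigl\{[x^r y^s\, dx] : 0 \leq r \leq b-2,\ 1 \leq s \leq a-1\bigr\}
\]
as a $K$-basis of $H^1_{\mathrm{dR}}({\mathcal{X}}/W) \otimes \Q$, while Proposition \ref{DV2} exhibits the same indexed family, interpreted now in $H^1_{\mathrm{MW}}(X/W) \otimes \Q$, as a $K$-basis of that space. Since $\iota_{*,\Q}$ sends each member of the first family to the correspondingly labelled member of the second, it sends a basis to a basis and is therefore a $K$-linear isomorphism. For $i=0$, both groups equal $K$ (as $X$ and ${\mathcal{X}}$ are connected) and $\iota_{*,\Q}$ is the identity; for $i \geq 2$, both sides vanish because ${\mathcal{X}}$ and $X$ are one-dimensional and the complexes $\Omega^{\bullet}_A, \Omega^{\bullet}_{A^{\dagger}}$ are concentrated in degrees $0$ and $1$.

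The potential obstacle would have been producing an explicit basis on either side, but that work has already been carried out in Propositions \ref{DV} and \ref{DV2}. Once those are in place, nothing more is needed and the corollary follows immediately.
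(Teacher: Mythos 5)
Your argument is correct and is essentially the paper's own proof: the paper likewise observes that $\iota_{*,\Q}$ sends the basis $[x^iy^jdx]$ of Proposition \ref{DV} to the basis of Proposition \ref{DV2} and concludes immediately (the corollary concerns the degree $i=1$ instance of diagram \eqref{eq2}, so your extra remarks about degrees $0$ and $\geq 2$ are harmless but not needed). Nothing is missing.
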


\begin{proof}
Because the map $\iota_{*,\Q}$ sends $[x^iy^jdx]$ to $[x^iy^jdx]$, 
the claim follows from Propositions \ref{DV} and \ref{DV2}. 
\end{proof}

%%%%%%%%%%%%%%%%%%%%%%%%%
%%%
\section{Infiniteness}
%%%
%%%%%%%%%%%%%%%%%%%%%%%%%

In this section, we give examples of affine smooth varieties $X$ over $k$ 
such that the first integral overconvergent de Rham--Witt cohomology modulo torsion 
$\overline{H}^1(X, W^{\dagger}\Omega^{\bullet}_X)$ is not finitely generated over $W$. 
Our basic example is the following one: 

\begin{theorem}\label{main1}
Let $a, b \geq 2$ be coprime integers prime to $p$, let $\overline{\alpha} \in k^{\times}$ 
and let $X$ be the superelliptic curve $y^a + x^b + \overline{\alpha} = 0$ 
(the affine smooth plane curve defined by this equation). 
Then $\overline{H}^1(X, W^{\dagger}\Omega^{\bullet}_X)$ is not finitely generated over $W$. 
\end{theorem}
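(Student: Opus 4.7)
The plan is to prove Theorem \ref{main1} by working in algebraic de Rham cohomology of a lift. By the isomorphism \eqref{eq1}, it suffices to show that $\overline{H}^1_{\mathrm{MW}}(X/W)$ is not finitely generated over $W$. Since the corollary at the end of Section 2 gives that $\iota_{*,\Q}$ is an isomorphism for $C_{ab}$-curves, the map $\overline{\iota}_* \colon \overline{H}^1_{\mathrm{dR}}(\mathcal{X}/W) \hookrightarrow \overline{H}^1_{\mathrm{MW}}(X/W)$ in \eqref{eq2} is an injection of $W$-submodules of the same finite-dimensional $K$-vector space; as $W$ is a DVR and any submodule of a finitely generated module over a DVR is finitely generated, it in fact suffices to prove that $\overline{H}^1_{\mathrm{dR}}(\mathcal{X}/W)$ is not finitely generated for some smooth lift $\mathcal{X}$.

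I would take $\mathcal{X} = \Spec W[x,y]/(y^a + x^b + \alpha)$ with $\alpha \in W^\times$ any lift of $\overline{\alpha}$ (smoothness follows from Lemma \ref{lemsm}). Specializing \eqref{eq6} to the superelliptic case ($f_k = 0$ for $1 \leq k \leq a-1$ and $f_0' = b x^{b-1}$) and substituting $y^{a+j} = -y^j(x^b+\alpha)$ yields, for each $m \geq 0$ and $1 \leq j \leq a-1$, the one-step reduction
\begin{equation*}
[x^{m+b}\, y^j\, dx] = -\frac{(m+1)\, a\, \alpha}{b(a+j) + (m+1)\, a}\,[x^m y^j dx]
\qquad \text{in } H^1_{\mathrm{dR}}(\mathcal{X}/W) \otimes \Q.
\end{equation*}
Iterating for $0 \leq r \leq b-2$ gives
\begin{equation*}
[x^{Nb+r}\, y^j\, dx] = \beta_N^{(r,j)}\,[x^r y^j dx], \qquad \beta_N^{(r,j)} := (-\alpha)^N \prod_{n=0}^{N-1}\frac{(nb+r+1)\,a}{b(a+j) + (nb+r+1)\,a}.
\end{equation*}
The left-hand side is represented by the integral form $x^{Nb+r} y^j dx \in \Omega^1_A$, hence its class lies in $\overline{H}^1_{\mathrm{dR}}(\mathcal{X}/W)$; in the basis of Proposition \ref{DV} its sole nonzero coordinate is $\beta_N^{(r,j)}$. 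Finite generation of $\overline{H}^1_{\mathrm{dR}}(\mathcal{X}/W)$ would thus force a uniform lower bound $v_p(\beta_N^{(r,j)}) \geq -M$ for all $N, r, j$, so it suffices to exhibit some $(r,j)$ and a sequence $N_k \to \infty$ with $v_p(\beta_{N_k}^{(r,j)}) \to -\infty$.

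Since $\alpha \in W^\times$ and $p \nmid ab$, one computes
\begin{equation*}
v_p(\beta_N^{(r,j)}) = \sum_{n=0}^{N-1}\bigl[v_p(n+\gamma) - v_p(n+\delta)\bigr] = v_p((\gamma)_N) - v_p((\delta)_N),
\end{equation*}
where $\gamma := (r+1)/b$ and $\delta := \gamma + (a+j)/a$ are distinct elements of $\mathbf{Z}_p$ ((distinct because $(a+j)/a$ is a non-integer rational with denominator $a > 1$ coprime to $p$), and $(c)_N$ is the Pochhammer symbol. Expanding $v_p(n+c) = \sum_{k \geq 1}\mathbf{1}_{p^k \mid n+c}$ and switching the order of summation, this difference becomes
\begin{equation*}
\sum_{k \geq 1}\bigl[\mathbf{1}_{\gamma^{(k)} < s_k} - \mathbf{1}_{\delta^{(k)} < s_k}\bigr],
\end{equation*}
with $s_k := N \bmod p^k$ and $\gamma^{(k)} := (-\gamma)\bmod p^k$, $\delta^{(k)} := (-\delta)\bmod p^k$; at most $O(\log_p N)$ summands are nonzero. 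The level-$k$ summand equals $-1$ precisely when $\delta^{(k)} < s_k \leq \gamma^{(k)}$, which is realizable exactly when $\gamma \bmod p^k > \delta \bmod p^k$. Because $\delta - \gamma \notin \mathbf{Z}$, the base-$p$ expansions of $\gamma$ and $\delta$ differ at infinitely many positions; I would fix $(r,j)$ for which infinitely many of these differences satisfy $\gamma_i > \delta_i$ at the top differing digit below level $k$, and then construct $N_K$ inductively, digit by digit in base $p$, so that $s_k \in (\delta^{(k)},\gamma^{(k)}]$ at each of the first $K$ such levels. This is feasible since the relevant intervals have length sufficient to contain one residue class modulo $p^{k-1}$. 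Such an $N_K$ yields $v_p(\beta_{N_K}^{(r,j)}) \leq -K + O(1)$, contradicting the assumed uniform bound.

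The main obstacle is exactly this last combinatorial/$p$-adic step: producing $N_K$'s whose base-$p$ digits align with the required inequalities across many levels simultaneously, and ruling out the possibility that for every allowable pair $(r,j)$ the digit-discrepancies are in short supply or of the wrong sign. Once this is carried out for some $(r, j)$, the non-finite-generation of $\overline{H}^1_{\mathrm{dR}}(\mathcal{X}/W)$, and therefore of $\overline{H}^1_{\mathrm{MW}}(X/W) \cong \overline{H}^1(X, W^\dagger \Omega^\bullet_X)$, follows at once.
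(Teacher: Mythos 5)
Your reduction is sound and coincides with the paper's: you pass to the smooth lift $\mathcal{X}: y^a+x^b+\alpha=0$, use the isomorphism $\iota_{*,\Q}$ for $C_{ab}$-curves to transfer the problem to $\overline{H}^1_{\mathrm{dR}}(\mathcal{X}/W)$, derive the same one-step reduction (your identity is the paper's \eqref{eq10} with $l=m+1$), and correctly reduce everything to showing that $\nu_p\bigl(\beta_N^{(r,j)}\bigr)\to-\infty$ along a subsequence for some admissible $(r,j)$. Your rewriting of this valuation as $\sum_{k\ge 1}\bigl[\mathbf{1}_{\gamma^{(k)}<s_k}-\mathbf{1}_{\delta^{(k)}<s_k}\bigr]$ is also correct.

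The problem is that the step you yourself flag as ``the main obstacle'' is precisely the heart of the proof, and your sketch of it does not close. Two issues. First, the feasibility claim for the digit-by-digit construction is false as stated: if the top position $i<k$ at which the expansions of $-\gamma$ and $-\delta$ differ is fixed, then $\gamma^{(k)}-\delta^{(k)}=\gamma^{(i+1)}-\delta^{(i+1)}$ does \emph{not} grow with $k$, so the interval $(\delta^{(k)},\gamma^{(k)}]$ generally cannot contain a full residue class modulo $p^{k-1}$. (This particular difficulty is repairable: taking $N\equiv-\gamma\pmod{p^K}$ forces $s_k=\gamma^{(k)}$ for all $k\le K$, so each level with $\gamma^{(k)}>\delta^{(k)}$ automatically contributes $-1$.) Second, and more seriously, you never establish that some admissible pair $(r,j)$ has infinitely many levels $k$ with $\gamma^{(k)}>\delta^{(k)}$; you only note that the expansions of $-\gamma$ and $-\delta$ differ infinitely often, which leaves open that all but finitely many discrepancies have the wrong sign. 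No argument is offered to rule this out, and it is not a routine verification. The paper resolves exactly this point by an arithmetic choice rather than a digit analysis: it picks $j,r$ with $p\equiv jb\pmod a$ and $p\equiv ra\pmod b$ (possible since $\gcd(a,b)=1$ and $p\nmid ab$), so that $p\equiv ra+jb+ab\pmod{ab}$, and then takes $N$ with $Nab+(ra+jb+ab)=p^M$; the counting argument with the sets $P_{M'},Q_{M'}$ then shows every level contributes $\le 0$ and every level $M'\in\mathbf{M}$ contributes $\le-1$, giving $\nu_p\le-d$ when $M$ is the $d$-th element of $\mathbf{M}$. Until you supply an analogue of this choice (or a proof that a suitable $(r,j)$ always exists), the proof is incomplete at its decisive step.
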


\begin{proof}
Note that $X$ is a special case (the case $\overline{f}_0(x) = x^b + \overline{\alpha}, \overline{f}_j(x) = 0$ 
$(1 \leq j \leq a-1)$) of the $C_{ab}$-curve $X$ in the previous section. 
Take a lift $\alpha \in W$ of $\overline{\alpha}$ and let ${\mathcal{X}}$ be 
the smooth lift of $X$ defined by the equation $y^a + x^b + \alpha = 0$. 
This is a special case (the case $f_0(x) = x^b + \alpha, \overline{f}_j(x) = 0$ 
$(1 \leq j \leq a-1)$) of the lift ${\mathcal{X}}$ in the previous section. Let 
$A = W[x,y]/(y^a + x^b + \alpha)$ be as in the previous section. 

We consider the algebraic de Rham cohomology $H^1_{\mathrm{dR}}({\mathcal{X}}/W) \otimes \Q = 
H^1(\Omega^{\bullet}_{A} \otimes \Q)$. The equality \eqref{eq6} in the cohomology 
is written as 
\[ \left[ 
b x^{l+b-1} y^j dx 
- \frac{la}{a+j} x^{l-1} y^{a+j} dx \right] = 0
\] 
in the case at hand. Using the defining equation $y^a + x^b + \alpha = 0$, 
it is rewritten as 
\[ 
\left[ b x^{l+b-1} y^j dx 
+ \frac{la}{a+j} x^{l-1}(x^b + \alpha) y^{j} dx \right] = 0, 
\] 
which is equivalent to the equality 
\begin{equation}\label{eq10}
\left[ x^{l+b-1}y^jdx\right] = 
\left[ - \frac{la\alpha}{la + jb + ab} x^{l-1}y^j dx \right]. 
\end{equation}

Recall that $1 \leq j \leq a-1$. Let $1 \leq r \leq b-1$, $N \in \N$ and 
consider the element $[x^{(N+1)b+(r-1)}y^jdx]$. 
By using the equality \eqref{eq10} with 
$l = Nb+r, (N-1)b+r, \dots, r$, we obtain the equality 
\begin{align}
[x^{(N+1)b+(r-1)}y^jdx] & = 
\prod_{n=0}^{N} \left( - \frac{(nb+r)a\alpha}{(nb+r)a + jb + ab} \right) 
[x^{r-1}y^j dx] \label{eq11} \\ 
& = \text{(unit)} \cdot 
\prod_{n=0}^{N} \frac{nb+r}{nab+(ra + jb + ab)}  \cdot 
[x^{r-1}y^j dx], \nonumber 
\end{align}
where $\text{(unit)}$ means an element in $W^{\times}$. 

Now we fix $j, r$ so that $p \equiv jb \,({\mathrm{mod}} \, a)$, 
$p \equiv ra \,({\mathrm{mod}} \, b)$. This is possible because 
$a, b$ are coprime and $p$ does not divide $ab$. Then we have 
$p \equiv ra + jb + ab \,({\mathrm{mod}} \, ab)$. Hence the set 
\[ {\mathbf{M}} := \{M \in \N\,|\, p^M \geq ra + jb + ab, \,\, p^M \equiv ra + jb + ab \,({\mathrm{mod}} \, ab) \} \] 
is infinite. Take any $M \in  {\mathbf{M}}$ and put 
$N := (p^M - (ra + jb + ab))/ab$. For such $N$, we compute the $p$-adic valuation 
$\nu := \nu_p \left( \displaystyle\prod_{n=0}^{N} \frac{nb+r}{nab+(ra + jb + ab)} \right)$
of $ \displaystyle\prod_{n=0}^{N} \frac{nb+r}{nab+(ra + jb + ab)}$. 

For $M' \in \N$, define the sets $P_{M'}, Q_{M'}$ by 
\[ P_{M'} := \{n \,|\, 0 \leq n \leq N, \,\, p^{M'} | nb + r \}, \quad 
Q_{M'} := \{n \,|\, 0 \leq n \leq N, \,\, p^{M'} | nab + (ra + jb + ab) \}. 
 \] 
Then 
\begin{align}
\nu & = \sum_{M'=1}^{\infty} M'( (|P_{M'}| - |P_{M'+1}|) - (|Q_{M'}| - |Q_{M'+1}|) ) \label{eq12} \\ 
& = \sum_{M'=1}^{\infty} (|P_{M'}| - |Q_{M'}|) = \sum_{M'=1}^{M} (|P_{M'}| - |Q_{M'}|). \nonumber
\end{align}
(For the last equality, note that $Nb+r \leq Nab + (ra+jb+ab) = p^M$.) 
So we estimate the terms $|P_{M'}| - |Q_{M'}|$ for $1 \leq M' \leq M$. 

In general, for $n_0 \in P_{M'}$ and $0 \leq n \leq N$, we have the equivalence 
\[ n \in P_{M'} \iff n-n_0 \in p^{M'}{\mathbf{Z}} \] 
because $b$ is prime to $p$, and the same property holds for the set $Q_{M'}$. 
So, if we denote the maximal element of $P_{M'}$ by $n_0$, we have the equality 
\[ P_{M'} = \{n_0, n_0 - p^{M'}, n_0 - 2 p^{M'}, \dots \} \cap \N. \]
Also, since $N \in Q_{M'}$ (because $p^{M'} | p^M = Nab + (ra + jb + ab)$), we have the equality 
\[ Q_{M'} = \{N, N - p^{M'}, N - 2 p^{M'}, \dots \} \cap \N. \]
Since $n_0 \leq N$ by definition, we see that 
$|P_{M'}| \leq |Q_{M'}|$. So $|P_{M'}| - |Q_{M'}| \leq 0$ for any $1 \leq M' \leq M$.  

Next we give a stronger estimate of $|P_{M'}| - |Q_{M'}|$ for 
$1 \leq M' \leq M$ when $M' \in {\mathbf{M}}$. 
If we define the sets $\widetilde{P}_{M'}$, $\widetilde{Q}_{M'}$ by 
\[ \widetilde{P}_{M'} := \{ a(nb + r) \,|\, n \in P_{M'}\}, \quad 
\widetilde{Q}_{M'} := \{ nab + (ra + jb + ab) \,|\, n \in Q_{M'}\}, \]
$|P_{M'}| - |Q_{M'}| = |\widetilde{P}_{M'}| - |\widetilde{Q}_{M'}|$. 
If we denote the maximal (resp. minimal) element of $P_{M'}$ by 
$n_0$ (resp. $n_1$) and put $\widetilde{n}_0 := a(n_0b + r)$ 
(resp. $\widetilde{n}_1 := a(n_1b + r)$), we have the equality 
\[ \widetilde{P}_{M'} = \{\widetilde{n}_0, \widetilde{n}_0 - ab p^{M'}, \widetilde{n}_0 - 2 ab p^{M'}, \dots, \widetilde{n}_1\}. \]
On the other hand, $N$ is the maximal element of $Q_{M'}$ and since 
$M' \in {\mathbf{M}}$, there exists $0 \leq N_1 \leq N$ with 
$N_1ab + (ra + jb + ab) = p^{M'}$; hence $N_1$ is the minimal element of $Q_{M'}$. 
Then 
\[ Q_{M'} = \{N, N - p^{M'}, N - 2 p^{M'}, \dots, N_1 \}, \]
and so we see the equality 
\[ \widetilde{Q}_{M'} = \{p^M, p^M - abp^{M'}, p^M - 2abp^{M'}, \dots, p^{M'} \}. \] 
Now, noting the inequalities 
\[ \widetilde{n}_1 = a(n_1b + r) > n_1b + r \geq p^{M'}, 
\quad \widetilde{n}_0 = a(n_0b + r) \leq a(Nb+r) < Nab + (ra + jb + ab) = p^{M}, \]
we see that $|\widetilde{P}_{M'}| < |\widetilde{Q}_{M'}|$. 
So $|P_{M'}| - |Q_{M'}| \leq -1$ for any $1 \leq M' \leq M$ with $M' \in {\mathbf{M}}$. 

By combining the inequalites proved in the previous two paragraphs and the equality \eqref{eq12}, 
we see that, if $M$ is the $d$-th element of the set ${\mathbf{M}}$, 
$\nu \leq -d$. Then, by putting it into the equality \eqref{eq11}, we see that, 
for some fixed $1 \leq j \leq a-1$ and $1 \leq r \leq b-1$, there exists a 
sequence of natural numbers $\{N_d\}_{d=0}^{\infty}$ such that 
\begin{equation}\label{eq13}
[x^{(N_d+1)b+(r-1)}y^jdx] = C_d [x^{r-1}y^j dx], \quad \nu_p(C_d) \leq -d.  
\end{equation}
Because $[x^{(N_d+1)b+(r-1)}y^jdx]$ is the cohomology class coming from 
the integral algebraic de Rham cohomology $H^1_{\mathrm{dR}}({\mathcal{X}}/W)$, 
we see from 
\eqref{eq13} that 
$\overline{H}^1_{\mathrm{dR}}({\mathcal{X}}/W)$ contains $K [x^{r-1}y^j dx] \cong K$. 
(The last isomorphism follows from Proposition \ref{DV}.) 
Then, by the diagram \eqref{eq2} and the fact that $\iota_{*,\Q}$ is an isomorphism, 
we conclude that $\overline{H}^1_{\mathrm{MW}}(X/W)$ also 
contains $K [x^{r-1}y^j dx] \cong K$. Since the last cohomology group is isomorphic to 
$\overline{H}^1(X, W^{\dagger}\Omega^{\bullet}_X)$, we conclude that 
$\overline{H}^1(X, W^{\dagger}\Omega^{\bullet}_X)$ is not finitely generated over $W$. 
\end{proof}

\begin{corollary}\label{cor}
For any prime number $p$ and any perfect field $k$ of characteristic $p$, 
there exist infinitely many affine smooth curves $X_i$ $(i \in \N)$ over $k$ 
whose smooth compactifications are all non-isomorphic 
such that 
$\overline{H}^1(X_i, W^{\dagger}\Omega_{X_i}^{\bullet}) \, (i \in \N)$ are not finitely generated 
over $W$. 
%For any prime number $p$ and any perfect field $k$ of characteristic $p$, 
%there exists an affine smooth curve $X$ over $k$ such that 
%$\overline{H}^1(X, W^{\dagger}\Omega_X^{\bullet})$ is not finitely generated 
%over $W$. 
\end{corollary}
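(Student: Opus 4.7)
The plan is to derive this corollary as a packaging of Theorem \ref{main1}, exhibiting an infinite family of coprime pairs $(a_i, b_i)$ satisfying the hypotheses of that theorem and giving rise to smooth compactifications of pairwise distinct genera.

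Fix some $\overline{\alpha} \in k^{\times}$, say $\overline{\alpha} = 1$. For each pair of coprime integers $a, b \geq 2$ both prime to $p$, Theorem \ref{main1} shows that the affine superelliptic curve $X_{a,b}$ defined by $y^a + x^b + \overline{\alpha} = 0$ has $\overline{H}^1(X_{a,b}, W^{\dagger}\Omega^{\bullet}_{X_{a,b}})$ not finitely generated over $W$. It therefore suffices to produce infinitely many such pairs whose associated smooth projective compactifications are pairwise non-isomorphic.

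To distinguish the compactifications, I would invoke Fact \ref{fact1}(iii): the genus of the smooth compactification of $X_{a,b}$ equals $(a-1)(b-1)/2$. Since smooth projective curves of different genera are non-isomorphic, it is enough to arrange that these genera take infinitely many distinct values along the chosen family. The cleanest way is to split on the parity of $p$. If $p$ is odd, I would fix $b = 2$ and let $a$ range over odd primes different from $p$; then $a, b \geq 2$, $\gcd(a,b) = \gcd(a,p) = \gcd(b,p) = 1$, and the genera $(a-1)/2$ assume infinitely many values. If $p = 2$, I would instead fix $b = 3$ and let $a$ range over primes $\geq 5$ distinct from $3$; again all hypotheses of Theorem \ref{main1} are satisfied, and the genera $a-1$ take infinitely many distinct values.

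There is essentially no substantive obstacle here, since Theorem \ref{main1} already carries out the analytic work. The only minor point requiring attention is the case $p = 2$, where the simplest choice $b = 2$ is forbidden; switching to $b = 3$ circumvents this while leaving the genus-based distinguishing argument intact.
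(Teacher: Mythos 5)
Your proposal is correct and follows essentially the same route as the paper: both deduce the corollary from Theorem \ref{main1} by choosing infinitely many admissible coprime pairs $(a,b)$ prime to $p$ and distinguishing the smooth compactifications by their genera $(a-1)(b-1)/2$ via Fact \ref{fact1}(iii). Your explicit case split on the parity of $p$ is just a concrete instantiation of the paper's choice of a sequence with strictly increasing genus, and it handles the $p=2$ subtlety correctly.
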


\begin{proof}
If we take a sequence of coprime integers $a_i, b_i \geq 2 \, (i \in \N)$ 
prime to $p$ with $(a_i-1)(b_i-1)/2 < (a_{i+1}-1)(b_{i+1}-1)/2$ and take as $X_i$ 
the superelliptic curve 
$y^{a_i} + x^{b_i} + 1 = 0$, 
$\overline{H}^1(X_i, W^{\dagger}\Omega^{\bullet}_{X_i}) \, (i \in \N)$ are 
not finitely generated over $W$ and smooth compactifications of $X_i$'s are 
all non-isomorphic because they have different genera. 
\end{proof}

It would be possible to provide 
more examples of infiniteness using the following:  

\begin{proposition}\label{main3}
If $X_1 \rightarrow X_2$ be a generically \'etale morphism of affine smooth curves. 
Then, if $\overline{H}^1(X_2, W^{\dagger}\Omega^{\bullet}_X)$ is not finitely generated over $W$, 
$\overline{H}^1(X_1, W^{\dagger}\Omega^{\bullet}_X)$ is not finitely generated over $W$ either. 
\end{proposition}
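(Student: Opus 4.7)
The plan is to prove the contrapositive: the morphism $f\colon X_1 \to X_2$ induces an injection $\overline{H}^1(X_2, W^{\dagger}\Omega^{\bullet}_{X_2}) \hookrightarrow \overline{H}^1(X_1, W^{\dagger}\Omega^{\bullet}_{X_1})$, and then, since $W$ is a complete discrete valuation ring and hence Noetherian, any $W$-submodule of a finitely generated $W$-module is itself finitely generated.

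First I would use Theorem \ref{thm1.1} and the isomorphism \eqref{eq1} to translate the problem into integral Monsky--Washnitzer cohomology; what has to be shown is that the induced map $\overline{H}^1_{\MW}(X_2/W) \to \overline{H}^1_{\MW}(X_1/W)$ is injective. By its very definition, $\overline{H}^1_{\MW}(X_i/W)$ sits inside $H^1_{\MW}(X_i/W)\otimes\Q$, so this injectivity follows once one knows that the rational pullback $f^*_{\Q}$ is injective. Under the Davis--Langer--Zink isomorphism $H^1_{\MW}(X_i/W)\otimes\Q \cong H^1_{\rig}(X_i/K)$ of \cite{DLZ_2011}, $f^*_{\Q}$ is identified with the pullback on rigid cohomology induced by $f$.

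To establish the latter injectivity I would factor $f$ as
\[ X_1 \xrightarrow{j} X_1' \xrightarrow{g} X_2, \]
where $X_1'$ is the normalization of $X_2$ inside the function field $k(X_1)$. Since $k(X_1)/k(X_2)$ is finite and separable by generic étaleness, $X_1'$ is again an affine smooth curve over $k$ (one-dimensional, normal, of finite type over a perfect field), $g$ is a finite generically étale morphism, and $j$ is an open immersion. For $g$ the standard trace map in rigid cohomology satisfies $g_* \circ g^* = \deg(g)\cdot \id$, and since $\deg(g) \in K^{\times}$ this forces $g^*$ to be injective. For $j$ the localization long exact sequence attached to the finite set of closed points $Z := X_1' \setminus X_1$, together with the Gysin isomorphism $H^i_{Z,\rig}(X_1'/K) \cong H^{i-2}_{\rig}(Z/K)$ (specialized to $i=0,1$), shows $H^0_{Z,\rig}(X_1'/K) = H^1_{Z,\rig}(X_1'/K) = 0$, so $j^*$ is injective as well. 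Hence $f^* = j^* \circ g^*$ is injective.

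The main obstacle, I expect, is not any single ingredient but their orchestration: one must check that the rigid-cohomology pullback is genuinely compatible with the integral Monsky--Washnitzer pullback through arbitrary choices of lifts, and that the trace and Gysin components —  both standard in rigid cohomology after Berthelot and Tsuzuki — combine naturally along the normalization factorization. Once the rational injectivity is in hand, the passage to $\overline{H}^1_{\MW}$ (via torsion-freeness) and the Noetherian conclusion are entirely formal.
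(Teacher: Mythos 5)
Your proof is correct and rests on the same two pillars as the paper's: injectivity of the pullback on rational (rigid) cohomology obtained from a trace argument for the finite part and from the vanishing of $H^{\leq 1}$ with supports in a zero-dimensional closed subset for the open part, followed by the formal observation that a $W$-submodule of a finitely generated $W$-module is finitely generated. The organization differs slightly. You factor $f$ as an open immersion into the normalization $X_1'$ of $X_2$ in $k(X_1)$ followed by a finite morphism $g\colon X_1'\to X_2$, and apply the trace to $g$; the paper instead shrinks \emph{both} curves to dense opens $X_1'\subset X_1$, $X_2'\subset X_2$ over which the map is finite \'etale, and then runs a three-column commutative diagram in which the composite $H^1_{\rig}(X_2/K)\hookrightarrow H^1_{\rig}(X_2'/K)\hookrightarrow H^1_{\rig}(X_1'/K)$ factors through $H^1_{\rig}(X_1/K)$. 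The practical difference is that your version invokes $g_*\circ g^*=\deg(g)\cdot\mathrm{id}$ for a finite, generically \'etale but possibly ramified morphism of smooth affine curves (finite flat, so the trace is available, e.g.\ via Poincar\'e duality, but it is less off-the-shelf than the finite \'etale case used in the paper); if you prefer to avoid that, you can simply restrict $g$ over its \'etale locus and absorb the extra shrinking into one more open-restriction injectivity, which recovers the paper's argument exactly. Your explicit appeal to the Gysin isomorphism to justify injectivity of restriction to a dense open is also slightly more detailed than the paper, which asserts the corresponding horizontal injections without comment.
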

\begin{proof}
One can take open subschemes $X'_1 \subset X_1, X'_2 \subset X_2$ such that 
$X'_1 \to X'_2$ is finite \'etale. Since we have the commutative diagram 
$$
\xymatrix{
\overline{H}^1(X_2, W^{\dagger}\Omega^{\bullet}_X) 
\ar@{^{(}->}[r] \ar[d] & H^1_{\rig}(X_2/K) \ar@{^{(}->}[r] \ar[d] & 
H^1_{\rig}(X'_2/K) \ar@{^{(}->}[d]
\\
\overline{H}^1(X_1, W^{\dagger}\Omega^{\bullet}_X) \ar@{^{(}->}[r]  
& H^1_{\rig}(X_1/K) \ar@{^{(}->}[r] & 
H^1_{\rig}(X'_1/K)
} 
$$
with horizontal arrows and the right vertical arrow injective, 
we see that the morphism $\overline{H}^1(X_2, W^{\dagger}\Omega^{\bullet}_X)  
\rightarrow \overline{H}^1(X_1, W^{\dagger}\Omega^{\bullet}_X) $ is injective as well, and hence the claim follows.
\end{proof}

%The following corollary is a stronger version of Corollary \ref{cor}. 
%
%\begin{corollary}\label{cor2}
%For any prime number $p$ and any perfect field $k$ of characteristic $p$, 
%there exist infinitely many affine smooth curves $X_i$ $(i \in \N)$ over $k$ 
%whose compactifications are all non-isomorphic 
%such that 
%$\overline{H}^1(X_i, W^{\dagger}\Omega_{X_i}^{\bullet}) \, (i \in \N)$ are not finitely generated 
%over $W$. 
%\end{corollary}

We give another proof of Corollary \ref{cor}, using Proposition \ref{main3}. 

\begin{proof}[{\sc Another proof of Corollary \ref{cor}}]
We take as $X$ the superelliptic curve $y^{a} + x^{b} + 1 = 0$ such that 
the genus $g(X^{\rm cpt}) = (a-1)(b-1)/2$ of the smooth compactification $X^{\rm cpt}$ of 
$X$ is $\geq 2$. Then, by taking a family 
$f_i: X_i^{\rm cpt} \to X^{\rm cpt} \,(i \in \N)$ of finite \'etale coverings  
with $g(X_{i+1}^{\rm cpt}) > g(X_i^{\rm cpt})$ and putting $X_i := f_i^{-1}(X)$, 
we obtain the required family $X_i$ $(i \in \N)$ thanks to Proposition \ref{main3}. 
\end{proof}

Next we consider the case of `general' $C_{ab}$-curves. 
Let $p$ be a fixed prime, let $a, b$ be coprime positive integers prime to $p$ and 
let $k_0$ be a field of characteristic $p$. 
Let $S$ be the set of pairs $(i,j) \in \N^2$ with $ai + bj < ab$ or $(i,j) = (b,0)$, and 
let $\Spec (k_0[\overline{z}_{ij}]_{(i,j) \in S}) = {\mathbf{A}}^{|S|}_{k_0}$ be the 
affine space with respect to the indeterminates $\{\overline{z}_{ij}\}_{(i,j) \in S}$. 
Let $g: {\mathfrak{X}} \to {\mathbf{A}}_{k_0}^{|S|}$ be the relative plane curve defined by the 
equation \eqref{eq3} with 
\begin{equation*}
\overline{f}_0(x) = \sum_{i=0}^{b}\overline{z}_{i0}x^i, \quad 
\overline{f}_j(x) = \sum_{ai+bj < ab}\overline{z}_{ij}x^i \,\, (1 \leq j \leq a-1).  
\end{equation*}
For a point ${\mathbf{c}}$ in ${\mathbf{A}}_{k_0}^{|S|}$, let  
${\mathfrak{X}}_{\mathbf{c}}$ be the fiber of the map 
$g$ at ${\mathbf{c}}$. Then we have the following proposition:

\begin{proposition}\label{gensm}
There exists an open dense subscheme $U$ of ${\mathbf{A}}^{|S|}_{k_0}$
such that a point ${\mathbf{c}} \in {\mathbf{A}}^{|S|}_{k_0}$ belongs to $U$ 
if and only if   
${\mathfrak{X}}_{\mathbf{c}}$ is a (smooth) $C_{ab}$-curve.  
\end{proposition}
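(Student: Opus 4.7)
The plan is to take $U$ to be the locus of $\mathbf{c} \in \mathbf{A}^{|S|}_{k_0}$ for which $\mathfrak{X}_\mathbf{c}$ is a (smooth) $C_{ab}$-curve, and to prove this set is open and dense. By the definition of a $C_{ab}$-curve, this amounts to two requirements: $\deg \overline{f}_0 = b$ at $\mathbf{c}$, equivalently $\overline{z}_{b0}(\mathbf{c}) \neq 0$ (the degree inequalities on $\overline{f}_j$ for $j \geq 1$ are built into the parametrization), together with smoothness of $\mathfrak{X}_\mathbf{c}$. The first condition cuts out the open complement $V_1$ of the hyperplane $\{\overline{z}_{b0} = 0\}$, so it suffices to show the smoothness locus inside $V_1$ is open in $\mathbf{A}^{|S|}_{k_0}$ and non-empty.

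For openness, I would adapt the compactification argument in the proof of Lemma \ref{lemsm} to the relative setting. Let $F_0 = \overline{f}(X^a, Y^b)$, $F_1 = (\partial \overline{f}/\partial x)(X^a, Y^b)$, and $F_2 = (\partial \overline{f}/\partial y)(X^a, Y^b)$ in $k_0[\{\overline{z}_{ij}\}_{(i,j) \in S}][X,Y]$ be the universal analogues, and let $F_i^{\mathrm{h}}(X, Y, Z)$ denote their homogenizations. These cut out a closed subscheme $\Sigma \subset \mathbf{A}^{|S|}_{k_0} \times \mathbf{P}^2$ which is proper over $\mathbf{A}^{|S|}_{k_0}$; hence the image $T$ of $\Sigma$ under the projection is closed. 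The fiberwise claim is that for $\mathbf{c} \in V_1$, $\mathfrak{X}_\mathbf{c}$ is smooth if and only if $\Sigma_\mathbf{c} = \emptyset$. In one direction, an affine singular point $(x_0, y_0)$ of $\mathfrak{X}_\mathbf{c}$ admits, over $k_0^{\mathrm{alg}}$, an $a$-th root $X_0$ of $x_0$ and a $b$-th root $Y_0$ of $y_0$ (permissible since $p \nmid ab$), giving a point $(X_0 : Y_0 : 1) \in \Sigma_\mathbf{c}$; conversely any $(X : Y : Z) \in \Sigma_\mathbf{c}$ with $Z \neq 0$ dehomogenizes to a singular point $(X^a, Y^b)$ of $\mathfrak{X}_\mathbf{c}$. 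The remaining points of $\Sigma_\mathbf{c}$ at $Z = 0$ are ruled out by exactly the top-degree computation in the proof of Lemma \ref{lemsm}: the identities $F_1^{\mathrm{h}}|_{Z=0} = b \overline{z}_{b0} X^{a(b-1)}$ and $F_2^{\mathrm{h}}|_{Z=0} = a Y^{b(a-1)}$ force $X = Y = 0$ as soon as $\overline{z}_{b0}(\mathbf{c}) \neq 0$ and $p \nmid ab$. Thus the smoothness locus coincides with $V_1 \setminus T$, which is open.

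For density, since $\mathbf{A}^{|S|}_{k_0}$ is irreducible, it suffices to exhibit one smooth $C_{ab}$-curve over $k_0$. The superelliptic curve $y^a + x^b + 1 = 0$, corresponding to $\overline{z}_{00} = \overline{z}_{b0} = 1$ and all other $\overline{z}_{ij} = 0$, works: its partial derivatives $b x^{b-1}$ and $a y^{a-1}$ vanish simultaneously only at the origin, which is not a zero of the defining equation. Hence $U$ is non-empty, and therefore open dense. The only subtlety in the plan is ensuring that singularities of the auxiliary system $F_0 = F_1 = F_2 = 0$ at infinity are excluded \emph{uniformly} in $\mathbf{c} \in V_1$; this is precisely what the compactification to $\mathbf{P}^2$ together with the top-degree calculation from Lemma \ref{lemsm} provides, so the reduction to a closed image of a proper map is routine.
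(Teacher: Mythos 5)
Your proof is correct, and it takes a genuinely different route from the paper's. The paper stays affine: it stratifies the possible singular points according to whether $x,y\neq 0$ or one of them vanishes, controls the first case via the incidence variety $Z\subset \mathbf{A}^{|S|}_{k_0}\times(\mathbf{A}_{k_0}\setminus\{0\})^2$ by showing each fibre $Z_{\alpha,\beta}$ is cut out by three independent linear conditions on the coefficients (so $Z$ has codimension $\geq 3$ and its image in $\mathbf{A}^{|S|}_{k_0}$ codimension $\geq 1$), and handles the locus $x=0$ or $y=0$ by discriminants of $\overline{f}(x,0)$ and $\overline{f}(0,y)$; density of the complement then follows from the codimension count. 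You instead promote the compactification trick of Lemma \ref{lemsm} (substitute $x=X^a$, $y=Y^b$ and homogenize) to the relative setting, so that the main theorem of elimination theory makes the bad locus visibly closed inside $\{\overline{z}_{b0}\neq 0\}$ --- your top-degree identities $F_1^{\mathrm{h}}|_{Z=0}=b\overline{z}_{b0}X^{a(b-1)}$ and $F_2^{\mathrm{h}}|_{Z=0}=aY^{b(a-1)}$ exclude points at infinity uniformly there --- and you get density from irreducibility of $\mathbf{A}^{|S|}_{k_0}$ together with the single smooth member $y^a+x^b+1=0$. Your route is arguably cleaner on the openness side: the paper's $Y_1$ is the image of a closed set under a non-proper projection, hence a priori only constructible, whereas your $T$ is closed by properness of $\mathbf{P}^2$. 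What the paper's argument buys is an explicit codimension estimate on the bad locus (in the spirit of \cite{CDV_2006}), which your existence-of-one-example argument does not provide but which the proposition as stated does not require.
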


\begin{proof}
The following proof is inspired by the proof of \cite[Proposition 1]{CDV_2006}. 
Let $Z$ be the closed subscheme of ${\mathbf{A}}_{k_0}^{|S|} \times ({\mathbf{A}}_{k_0} \setminus \{0\})^2$ 
(with the coordinates $\overline{z}_{ij} \, ((i,j) \in S), x, y$) 
defined by the equation $\overline{f} = \frac{\partial \overline{f}}{\partial x} 
= \frac{\partial \overline{f}}{\partial y} = 0$, regarded as the equation in 
$\vert S\vert + 2$ variables $\overline{z}_{ij} \, ((i,j) \in S), x, y$. 

First we prove that, for each $\alpha, \beta \in k_0^{\rm alg} \setminus \{0\}$, the pullback 
$Z_{\alpha, \beta}$ of $Z$ to 
${\mathbf{A}}_{k_0^{\rm alg}}^{|S|} \times \{(\alpha, \beta)\}$ is a closed subscheme of  
codimension $\geq 3$ in ${\mathbf{A}}_{k_0^{\rm alg}}^{|S|} \times \{(\alpha, \beta)\}$: 
Note that we have 
$$\overline{f} = y^a + \sum_{(i,j) \in S} \overline{z}_{ij}x^iy^j, \quad  
\frac{\partial \overline{f}}{\partial x} = \sum_{(i,j) \in S} \overline{z}_{ij}i x^{i-1}y^j, 
\quad 
\frac{\partial \overline{f}}{\partial y} = ay^{a-1} + \sum_{(i,j) \in S} \overline{z}_{ij}j x^iy^{j-1}. $$
Hence, if we set $\widetilde{S} := S \cup \{(0,a)\}$, $Z_{\alpha, \beta}$ is isomorphic to 
the closed subscheme 
\begin{equation}\label{eq14}
\left\{(\overline{z}_{i,j})_{i,j \in \widetilde{S}} \,\left|\, 
\sum_{(i,j) \in \widetilde{S}} \overline{z}_{ij} \alpha^i\beta^j = 
\sum_{(i,j) \in \widetilde{S}} \overline{z}_{ij} i \alpha^{i-1}\beta^j = 
\sum_{(i,j) \in \widetilde{S}} \overline{z}_{ij} j \alpha^i\beta^{j-1} = 0 \right. \right\} \cap 
\{(\overline{z}_{ij})_{i,j \in \widetilde{S}} \,|\, \overline{z}_{0a} = 1\} 
\end{equation}
in ${\mathbf{A}}_{k_0^{\rm alg}}^{|\widetilde{S}|} \cap \{(\overline{z}_{ij})_{i,j \in \widetilde{S}} \,|\, \overline{z}_{0a} = 1\}$ 
via the natural isomorphism 
\[ {\mathbf{A}}_{k_0^{\rm alg}}^{|\widetilde{S}|} \cap \{(\overline{z}_{ij})_{i,j \in \widetilde{S}} \,|\, \overline{z}_{0a} = 1\} 
\cong {\mathbf{A}}_{k_0^{\rm alg}}^{|S|} \cong {\mathbf{A}}_{k_0^{\rm alg}}^{|\widetilde{S}|} \times \{(\alpha, \beta)\}. \] 
Since the vectors $(\alpha^i\beta^j, i\alpha^{i-1}\beta^j, j\alpha^i\beta^{j-1})$ for 
$(i,j) = (b,0), (0,a), (0,0)$ are linearly independent, we see that the former set of 
\eqref{eq14} is a linear subscheme of codimension $3$ in 
${\mathbf{A}}_{k_0^{\rm alg}}^{|\widetilde{S}|}$, and by taking intersection with the latter set 
in \eqref{eq14}, we see that $Z_{\alpha,\beta}$ is a closed subscheme of codimension 
$\geq 3$ in ${\mathbf{A}}_{k_0^{\rm alg}}^{|S|} \times \{(\alpha, \beta)\}$, as required. 

Since $Z_{\alpha,\beta}$ is a closed subscheme of codimension 
$\geq 3$ in ${\mathbf{A}}_{k_0^{\rm alg}}^{|S|} \times \{(\alpha, \beta)\}$ for any $(\alpha,\beta)$, 
$Z$ is a closed subscheme of codimension $\geq 3$ in 
${\mathbf{A}}_{k_0}^{|S|} \times ({\mathbf{A}}_{k_0} \setminus \{0\})^2$. 
If we define $Y_1$ to be the image of $Z$ by the projection 
${\mathbf{A}}_{k_0}^{|S|} \times ({\mathbf{A}}_{k_0} \setminus \{0\})^2 \to 
{\mathbf{A}}_{k_0}^{|S|}$, it is a closed subscheme of codimension $\geq 1$, 
and this is the set of points ${\mathbf{c}}$ in ${\mathbf{A}}_{k_0}^{|S|}$ such that 
the fiber ${\mathfrak{X}}_{\mathbf{c}}$ 
is not smooth at some point $(x,y)$ with $x \not= 0, y \not= 0$. 

On the other hand, let $Y'_2$ be the closed subscheme of codimension $1$ in 
${\mathbf{A}}_{k_0}^{|S|}$ 
defined by $\{z_{b0} = 0\}$, and let $Y'_3$ be the closed subscheme of codimension $1$ in 
${\mathbf{A}}_{k_0}^{|S|} \setminus Y'_2$ by  
$\{{\rm disc}(\overline{f}(x,0)) = 0 \} \cup 
\{{\rm disc}(\overline{f}(0,y)) = 0\}$. 
$Y'_2$ is the set of points ${\mathbf{c}}$ in ${\mathbf{A}}_{k_0}^{|S|} \setminus Y'_2$ 
such that the defining equation of ${\mathfrak{X}}_{\mathbf{c}}$ has degree $< b$ in $x$, and 
$Y'_3$ is the set of points ${\mathbf{c}}$ in ${\mathbf{A}}_{k_0}^{|S|} \setminus Y'_2$ 
such that ${\mathfrak{X}}_{\mathbf{c}}$ is not smooth at some point $(x,y)$ with $x = 0$ or $y = 0$. 
If we define $Y_2$ to be the union of $Y'_2$ and the closure of $Y'_3$ in ${\mathbf{A}}_{k_0}^{|S|}$, 
it is of codimension $1$ in ${\mathbf{A}}_{k_0}^{|S|}$. 

Now let $U := {\mathbf{A}}_{k_0}^{|S|} \setminus (Y_1 \cup Y_2)$. Then $U$ is dense open in 
${\mathbf{A}}_{k_0}^{|S|}$ and a point ${\mathbf{c}} \in {\mathbf{A}}_{k_0}^{|S|}$ belongs to $U$ 
if and only if  
${\mathfrak{X}}_{\mathbf{c}}$ is a (smooth) $C_{ab}$-curve. 
So the proof of the proposition is finished. 
\end{proof}

We denote the pullback of the map $g: {\mathfrak{X}} \to {\mathbf{A}}_{k_0}^{|S|}$ to $U$ 
by $g_U: {\mathfrak{X}}_U \to U$. This is a family of $C_{ab}$-curves. 
Then we can prove the following infiniteness result using Theorem \ref{main1}. 

\begin{theorem}\label{main2}
Let the notations be as above. Then, there exists a sequence of 
closed subschemes $Z_d \, (d \in \N)$ of codimension $\geq 1$ in $U$ 
satisfying the following: For any perfect field $k$ containing $k_0$ and 
for any morphism 
$h: \Spec(k) \to U$ whose image is not contained in $\bigcup_{d \in \N} (\bigcap_{d' =d}^{\infty} Z_{d'})$, 
if we denote the pullback of $g_U$ with respect to $h$ by $X \to \Spec(k)$, then 
$\overline{H}^1(X, W^{\dagger}\Omega^{\bullet}_X)$ is not finitely generated over $W$. 
\end{theorem}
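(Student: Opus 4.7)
The plan is to extend the argument of Theorem \ref{main1} to the universal $C_{ab}$-family by working at the level of the rational functions provided by Remark \ref{remDV}, and to take $Z_d$ to be the closed locus of $U$ where these universal formulas fail to exhibit sufficiently negative $p$-adic valuation. Fix integers $1 \leq j \leq a-1$ and $1 \leq r \leq b-1$ satisfying $p \equiv jb \pmod{a}$ and $p \equiv ra \pmod{b}$, and let $(N_d)_{d \in \N}$ be the sequence obtained from the $d$-th element of $\mathbf{M}$ as in the proof of Theorem \ref{main1}. By Remark \ref{remDV}, the class $[x^{(N_d+1)b+r-1}y^j dx]$ in the rational algebraic de Rham cohomology has a basis expansion whose coefficients are specializations of rational functions $G_d^{(i',j')}(\mathbf{z}) \in \Q[z_{st}][z_{b0}^{-1}]$, indexed by $(i',j')$ with $0 \leq i' \leq b-2$, $1 \leq j' \leq a-1$.

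Writing $G_d^{(i',j')} = z_{b0}^{-e} P_d^{(i',j')}$ with $P_d^{(i',j')} \in \Q[z_{st}]$, let $m_d^{(i',j')}$ be the minimum $p$-adic valuation among the rational coefficients of $P_d^{(i',j')}$, and set $\tilde{P}_d^{(i',j')} := p^{-m_d^{(i',j')}} P_d^{(i',j')} \in \mathbf{Z}_{(p)}[z_{st}]$; its reduction $\overline{\tilde P}_d^{(i',j')} \in \mathbf{F}_p[z_{st}] \subset k_0[z_{st}]$ is then a nonzero polynomial. I would define
\[
Z_d := U \cap \bigcap_{(i',j')\,:\,m_d^{(i',j')} \leq -d} \{\overline{\tilde P}_d^{(i',j')} = 0\}.
\]
The essential input from Theorem \ref{main1} is that at the superelliptic specialization $\mathbf{c}_0 \in U(k_0)$ corresponding to $y^a + x^b + 1 = 0$ (so $c_{b0} = c_{00} = 1$ and all other $c_{st} = 0$), one computes $G_d^{(r-1,j)}(\tilde{\mathbf{c}}_0) = C_d$ with $\nu_p(C_d) \leq -d$. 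Since no specialization at a point with $c_{b0}$ a unit can have $p$-adic valuation smaller than $m_d^{(r-1,j)}$, this forces $m_d^{(r-1,j)} \leq -d$, so $\overline{\tilde P}_d^{(r-1,j)}$ cuts out a nontrivial hypersurface in $U$ that contains $Z_d$; hence $Z_d$ has codimension $\geq 1$ in $U$.

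Now suppose $h : \Spec(k) \to U$ is a morphism whose image is not contained in $\bigcup_d \bigcap_{d' \geq d} Z_{d'}$; equivalently, the image point avoids $Z_d$ for infinitely many $d$. For each such $d$, some $(i',j')$ satisfies both $m_d^{(i',j')} \leq -d$ and $\overline{\tilde P}_d^{(i',j')}$ nonvanishing at the image. Lifting the image to $\tilde{\mathbf{c}} \in W^{|S|}$ (possible by perfectness of $k$, with $\tilde c_{b0} \in W^{\times}$ since $c_{b0} \neq 0$ on $U$), one obtains $\nu_p(G_d^{(i',j')}(\tilde{\mathbf{c}})) = m_d^{(i',j')} \leq -d$. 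Consequently the integral class $[x^{(N_d+1)b+r-1}y^j dx] \in \overline{H}^1_{\mathrm{dR}}(\mathcal{X}/W)$, expressed in the basis of Proposition \ref{DV}, has a coordinate of valuation $\leq -d$. As $d$ ranges over an infinite set, this shows $\overline{H}^1_{\mathrm{dR}}(\mathcal{X}/W)$ is unbounded in the finite-dimensional $K$-vector space $H^1_{\mathrm{dR}}(\mathcal{X}/W) \otimes \Q$, hence is not finitely generated over $W$. The corollary after Proposition \ref{DV2} combined with diagram \eqref{eq2} yields an injection $\overline{H}^1_{\mathrm{dR}}(\mathcal{X}/W) \hookrightarrow \overline{H}^1_{\MW}(X/W)$, and the isomorphism \eqref{eq1} then transfers non-finite-generation to $\overline{H}^1(X, W^{\dagger}\Omega^{\bullet}_X)$.

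The main obstacle will be the careful separation of the $z_{b0}$-pole from the genuine $p$-adic content of each $G_d^{(i',j')}$, together with verifying that the superelliptic specialization is an honest witness to $m_d^{(r-1,j)} \leq -d$; this is what makes $Z_d$ a proper closed subscheme of $U$ defined already over $k_0$ rather than only over $k$. Once this universal-coefficient setup is in place, the rest is a direct family version of the computation in Theorem \ref{main1}.
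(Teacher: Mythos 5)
Your proposal is correct and follows essentially the same route as the paper: both arguments use the universal coefficients $G^{i_d,j}_{i',j'} \in \Q[z_{st}][z_{b0}^{-1}]$ from Remark \ref{remDV}, factor out the $p$-power content of the numerator, define $Z_d$ as the zero locus of the mod-$p$ reduction of the normalized polynomial, and use the superelliptic specialization from Theorem \ref{main1} as the witness that this content is at most $-d$. The only (harmless) difference is that the paper builds $Z_d$ from the single coefficient indexed by $(r-1,j)$, whereas you intersect over all indices with sufficiently negative content, which yields a slightly smaller $Z_d$ but the same conclusion.
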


Theorem \ref{main2} is applicable when $h$ factors as 
$\Spec(k) \to \Spec(k_0(U)) \to U$, 
where the first morphism is 
induced by an inclusion $k_0(U) \hookrightarrow k$ of the function field $k_0(U)$ of $U$ to 
a perfect field $k$ and the second morphism is the generic point of $U$. 
% the image of $h$ is the generic point of $U$. 
Also, when $k_0$ is uncountable, the set $U \setminus \bigcup_{d \in \N} (\bigcap_{d' =d}^{\infty} Z_{d'})$ 
contains uncountably many closed points. In these senses, 
$\overline{H}^1(X,W^{\dagger}\Omega^{\bullet}_X)$ is not finitely generated over $W$
for  a `general' $C_{ab}$-curve $X$. 

\begin{proof}
As before, let $S$ be the set of pairs $(i,j) \in \N^2$ with $ai + bj < ab$ or $(i,j) = (b,0)$. 
Take a perfect field $k$ containing $k_0$ and a morphism 
$h: \Spec(k) \to U (\hookrightarrow {\mathbf{A}}^{|S|}_{k_0} = \Spec(k_0[\overline{z}_{ij}]_{(i,j) \in S}))$. 
Let $\overline{c}_{ij} \,((i,j) \in S)$ be the image of 
$\overline{z}_{ij}$ by $h^*: k_0[\overline{z}_{ij}]_{(i,j) \in S} \to k$ and let 
$c_{ij} \in W = W(k)$ be a lift of $\overline{c}_{ij}$. 

Consider the relative curve ${\mathcal{X}}$ over $W$ defined by 
\eqref{eqcoef}, \eqref{eq4}. By 
Lemma \ref{lemsm}, ${\mathcal{X}}$ is smooth over $W$ and the generic fiber 
${\mathcal{X}}_K$ of ${\mathcal{X}}/W$ is a $C_{ab}$-curve. 
Take $1 \leq r \leq b-1, 1 \leq j \leq a-1$ and 
the sequence of natural numbers $\{N_d\}_{d=0}^{\infty}$ as in the proof of Theorem \ref{main1}. 
Let $i_d := (N_d+1)b+(r-1)$. Then, by Remark \ref{remDV}, we can write 
\begin{equation}\label{eq15}
[x^{i_d}y^jdx] = \sum_{0 \leq i' \leq b-2 \atop 1 \leq j' \leq a-1} G^{i_d,j}_{i',j'}(c_{st}) [x^{i'}y^{j'}dx] 
\end{equation} 
for some $G^{i_d,j}_{i',j'} \in \Q[z_{st}]_{(i,j) \in S}[z_{b0}^{-1}]$. 
(Note that $G^{i_d,j}_{i',j'}$ is independent of the choice of $h$ and the  choice of 
  lifts $\{c_{st}\}_{(s,t) \in S}$.)  
By the proof of Theorem \ref{main1}, the equality \eqref{eq13} is the one we obtain 
from \eqref{eq15} by specializing $z_{st} \, ((s,t) \in S)$ as 
\begin{equation}\label{eq16}
z_{st} \mapsto 0 \,\, ((s,t) \not= (0,0), (b,0)), \quad 
z_{00} \mapsto \alpha \in W^{\times}, \quad  z_{b0} \mapsto 1. 
\end{equation}

Define $l_d$ to be the least integer such that $p^{l_d}G^{i_d,j}_{r,j} \in
 {\mathbf{Z}}_p[z_{st}]_{(i,j) \in S}[z_{b0}^{-1}]$. 
Then, by the specialization \eqref{eq16}, 
$p^{l_d}G^{i_d,j}_{r,j}$ is sent to $p^{l_d}C_d$, where $C_d$ is as in \eqref{eq13}. 
Because this specialization takes value in ${\mathbf{Z}}_p$, 
we see that $p^{l_d}C_d \in {\mathbf{Z}}_p$, hence 
$l_d - d \geq l_d + \nu_p (C_d) \geq 0.$ Thus $l_d \geq d$. 

Let $\overline{G}^{i_d,j}_{r,j} \not= 0$ be the image of 
$p^{l_d}G^{i_d,j}_{r,j} \in {\mathbf{Z}}_p[z_{st}]_{(i,j) \in S}[z_{b0}^{-1}]$ by the reduction map  
$ {\mathbf{Z}}_p[z_{st}]_{(i,j) \in S}[z_{b0}^{-1}] \to 
{\mathbf{F}}_p[\overline{z}_{st}]_{(i,j) \in S}[\overline{z}_{b0}^{-1}]$, and let 
$Z_d$ be the zero locus of $\overline{G}^{i_d,j}_{r,j}$ in $U$. 
Note that $\overline{z}_{b0}$ is invertible in $U$ and so $Z_d$ is well-defined as a closed subscheme of $U$, and it is of codimension $\geq 1$ because 
 $\overline{G}^{i_d,j}_{r,j}$ is nonzero. Also, if the image of $h$ is not contained in $Z_d$, 
then $\overline{G}^{i_d,j}_{r,j}(\overline{c}_{st})$ is nonzero in $k$ and so 
$\nu_p(G^{i_d,j}_{i,j}(c_{st})) = -l_d \leq -d$. 

Now suppose that the image of $h$ is not contained in $\bigcup_{d \in \N} (\bigcap_{d' =d}^{\infty} Z_{d'})$. 
Because $[x^{i_d}y^jdx]$ is a cohomology class coming from 
the integral algebraic de Rham cohomology $H^1({\mathcal{X}}/W)$, we see from 
\eqref{eq15} and the calculation in the previous paragraph that 
$\overline{H}^1_{\rm{dR}}({\mathcal{X}}/W)$ is not contained in any of 
$p^{-d}(\bigoplus_{0 \leq i \leq b-2 \atop 1 \leq j \leq a-1}[x^iy^j dx])$ $(d \in \N)$
and so $\overline{H}^1_{\mathrm{MW}}(X/W)$ is not contained in any of 
 $p^{-d}(\bigoplus_{0 \leq i \leq b-2 \atop 1 \leq j \leq a-1}[x^iy^j dx])$ $(d \in \N)$. 
Since the last cohomology group is isomorphic to 
$\overline{H}^1(X, W^{\dagger}\Omega^{\bullet}_X)$, we conclude that 
$\overline{H}^1(X, W^{\dagger}\Omega^{\bullet}_X)$ is not finitely generated over $W$. 
\end{proof}

\begin{remark}
If infinitely many  
$Z_d$'s ($d \in {\mathbf{N}}$) intersect properly, the set 
$\bigcup_{d \in \N} (\bigcap_{d' =d}^{\infty} Z_{d'})$ 
in Theorem \ref{main2} is empty. 
Thus we expect that 
$\overline{H}^1(X, W^{\dagger}\Omega^{\bullet}_X)$ would not be 
finitely generated over $W$ for any $C_{ab}$-curve $X$.  
\end{remark}

Our results suggest that, for 
most affine smooth curves $X$, the group $\overline{H}^1(X_1, W^{\dagger}\Omega^{\bullet}_X)$ are not  finitely generated over $W$. 
On the other hand, we have the following proposition. 

\begin{proposition}
For an affine smooth curve $X$ over $k$ whose smooth compactification has genus $0$, 
$\overline{H}^1(X, W^{\dagger}\Omega^{\bullet}_X)$ is finitely generated over $W$. 
\end{proposition}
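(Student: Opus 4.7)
The plan is to embed $\overline{H}^{1}_{\mathrm{MW}}(X/W)$ into a finitely generated $W$-module of integral residues at the points at infinity, and then to use \eqref{eq1} to transfer the finite generation to $\overline{H}^{1}(X, W^{\dagger}\Omega^{\bullet}_X)$. Let $X^{\mathrm{cpt}}$ denote the smooth compactification of $X$, of genus $0$ by assumption, and set $D := X^{\mathrm{cpt}} \setminus X$, a non-empty finite set of closed points. First I would lift $X^{\mathrm{cpt}}$ to a smooth projective $W$-scheme $\mathcal{X}^{\mathrm{cpt}}$; the obstructions in $H^{2}(X^{\mathrm{cpt}}, T_{X^{\mathrm{cpt}}})$ vanish automatically for a smooth curve, so such a lift exists. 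Since $\mathcal{X}^{\mathrm{cpt}}$ is smooth over $W$, each closed point $P \in D$ lifts uniquely to a section $\mathcal{P} := \Spec W(\kappa(P)) \hookrightarrow \mathcal{X}^{\mathrm{cpt}}$, and these assemble into a relative effective divisor $\mathcal{D}$. Then $\mathcal{X} := \mathcal{X}^{\mathrm{cpt}} \setminus \mathcal{D}$ is a smooth affine $W$-lift of $X$, and by independence of the lift I would use $A := \Gamma(\mathcal{X}, \co)$ to compute $H^{1}_{\mathrm{MW}}(X/W)$.

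Next, consider $V := H^{1}_{\mathrm{MW}}(X/W) \otimes_W K = H^{1}_{\mathrm{rig}}(X/K)$. Applying the residue exact sequence to the log de Rham complex $\Omega^{\bullet}_{\mathcal{X}^{\mathrm{cpt}}_K/K}(\log \mathcal{D}_K)$ on the generic fibre, using the vanishings $H^{1}(X^{\mathrm{cpt}}_K, \co) = 0 = H^{0}(X^{\mathrm{cpt}}_K, \Omega^{1})$ available in genus $0$, together with the classical comparison of log de Rham cohomology of a smooth proper pair over $K$ with rigid cohomology of the open complement in the special fibre, the residue map induces an isomorphism
\[ \mathrm{Res}: V \xrightarrow{\cong} \Bigl\{ (\lambda_P)_{P \in D} \in \bigoplus_{P \in D} L_P \,:\, \sum_{P \in D} \mathrm{Tr}_{L_P/K}(\lambda_P) = 0 \Bigr\}, \]
where $L_P := \Frac W(\kappa(P))$. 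The target of $\mathrm{Res}$ naturally contains the finitely generated $W$-submodule
\[ \Lambda := \Bigl\{ (\lambda_P)_{P \in D} \in \bigoplus_{P \in D} W(\kappa(P)) \,:\, \sum_{P \in D} \mathrm{Tr}_{L_P/K}(\lambda_P) = 0 \Bigr\}, \]
and the proof would be complete once I show $\mathrm{Res}(\overline{H}^{1}_{\mathrm{MW}}(X/W)) \subseteq \Lambda$.

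This reduces to a local integrality statement: for any closed $\omega \in \Omega^{1}_{A^{\dagger}}$ and any $P \in \mathcal{D}$, the residue $\mathrm{Res}_P(\omega)$ lies in $W(\kappa(P))$ rather than merely in $L_P$. Choosing a local parameter $t_P$ at $P$ on $\mathcal{X}^{\mathrm{cpt}}$ defined over $W(\kappa(P))$ (possible by smoothness), one verifies that any element of $A^{\dagger}$ admits near $P$ a Laurent expansion $\sum_{i \geq -N} b_i t_P^{i}$ with $b_i \in W(\kappa(P))$ satisfying the overconvergence condition, and the residue is simply $b_{-1}$. Integrality of the $b_i$ reflects the facts that the points of $D$ remain pairwise distinct in the special fibre (so differences of parameter lifts are units in $W(\kappa(P))$, giving integral partial-fraction expansions at the level of $A$) and that weak completion does not introduce denominators. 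Combined with the residue theorem $\sum_P \mathrm{Tr}(\mathrm{Res}_P(\omega)) = 0$, this yields $\overline{H}^{1}_{\mathrm{MW}}(X/W) \hookrightarrow \Lambda$; since $\Lambda$ is finitely generated over the Noetherian ring $W$, so is $\overline{H}^{1}_{\mathrm{MW}}(X/W)$, and \eqref{eq1} gives the claim for $\overline{H}^{1}(X, W^{\dagger}\Omega^{\bullet}_X)$. The main obstacle is precisely the integrality of residues for weakly complete forms: the claim is intuitively clear, but rigorously tracking the Laurent expansion of an element of $A^{\dagger}$ at an integral point of $\mathcal{X}^{\mathrm{cpt}}$ requires careful local analysis; the other ingredients (lifting the compactification, the rational residue identification, and the log de Rham to rigid comparison) are by now standard.
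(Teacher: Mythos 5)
Your proof is correct in outline but takes a genuinely different route from the paper's. The paper first reduces, via a finite base change $k'/k$ and the compatibility $\overline{H}^1(X, W^{\dagger}\Omega^{\bullet}_X)\otimes_W W(k')\cong \overline{H}^1(X\otimes_k k', W^{\dagger}\Omega^{\bullet}_{X\otimes_k k'})$, to the case $X=\mathbf{A}^1_k\setminus\{\overline{\alpha}_1,\dots,\overline{\alpha}_r\}$, and then inducts on $r$: the case $r=1$ is an explicit computation in the weak completion of $W[x,x^{-1}]$ (giving $\overline{H}^1=W[x^{-1}dx]$), and the induction step uses Meredith's lemma that $A_1^{\dagger}\oplus A_2^{\dagger}\to A^{\dagger}$ is surjective, so that $\overline{H}^1(\Omega^{\bullet}_{A^{\dagger}})$ is exhibited as a quotient of a finitely generated module. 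Your residue-theoretic embedding $\overline{H}^1_{\mathrm{MW}}(X/W)\hookrightarrow\Lambda$ avoids both the base change and the induction, works uniformly in the residue fields at infinity, isolates exactly why genus $0$ matters (injectivity of $\Res$ on $H^1_{\mathrm{rig}}$), and in principle gives more, namely an explicit finite lattice containing $\overline{H}^1$. The price is that you must invoke the log-de Rham/rigid comparison and, more seriously, establish integrality of residues of closed forms in $\Omega^1_{A^{\dagger}}$, which you rightly single out as the crux; this does go through (elements of $A=\varinjlim_n\Gamma(\mathcal{X}^{\mathrm{cpt}},\mathcal{O}(n\mathcal{D}))$ expand in $W(\kappa(P))[[t_P]][t_P^{-1}]$, and passing to the weak completion only adds terms whose contributions to the coefficient of $t_P^{-1}$ form a $p$-adically convergent series in $W(\kappa(P))$), whereas the paper needs only Meredith's surjectivity statement and one direct computation. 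Two small corrections: a closed point of the special fibre lifts to a section of $\mathcal{X}^{\mathrm{cpt}}$ by smoothness, but not \emph{uniquely} (any choice works); and a general element of $A^{\dagger}$ has a two-sided Laurent expansion $\sum_{i\in\mathbf{Z}}b_i t_P^{i}$ with $\nu_p(b_i)\to\infty$ as $i\to-\infty$, not one with finite principal part as you wrote, though the residue $b_{-1}$ is still a well-defined element of $W(\kappa(P))$.
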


\begin{proof}
For a finite extension $k'$ of $k$, we have the base change property 
$H^1(X, W^{\dagger}\Omega^{\bullet}_X) \otimes_W W(k') \cong 
H^1(X \otimes_k k', W^{\dagger}\Omega^{\bullet}_{X \otimes_k k'})$ and so 
$\overline{H}^1(X, W^{\dagger}\Omega^{\bullet}_X) \otimes_W W(k') \cong 
\overline{H}^1(X \otimes_k k', W^{\dagger}\Omega^{\bullet}_{X \otimes_k k'})$. 
Hence, if $\overline{H}^1(X \otimes_k k', W^{\dagger}\Omega^{\bullet}_{X \otimes_k k'})$
is finitely generated over $W(k')$, $\overline{H}^1(X, W^{\dagger}\Omega^{\bullet}_X)$ 
is finitely generated over $W$. Thus we may replace $k$ by a finite extension of it 
to prove the proposition. Thus we may assume that 
$X = {\mathbf{A}}^1_k \setminus \{\overline{\alpha}_1, \dots, \overline{\alpha}_r\}$ 
for some distinct $\overline{\alpha}_i \in k$ $(1 \leq i \leq r)$. 

We prove the finiteness of $\overline{H}^1(X, W^{\dagger}\Omega^{\bullet}_X)$ 
by induction on $r$. If $r=0$, $X = {\mathbf{A}}^1_k$. In this case, 
$\overline{H}^1(X, W^{\dagger}\Omega^{\bullet}_X) \subset H^1_{\rm rig}(X/K) = \{0\}$
and so the claim is true. 

If $r=1$, we may assume that $X = {\mathbf{A}}^1_k \setminus \{0\}$. Then 
$\overline{H}^1(X, W^{\dagger}\Omega^{\bullet}_X) \cong \overline{H}^1_{\rm MW}(X/W) 
= \overline{H}^1(\Omega^{\bullet}_{A^{\dagger}})$, where 
\[ A^{\dagger} = \{ \sum_{n \in {\mathbf{Z}}} a_n x^n \,|\, a_n \in W \text{ and  } \exists \epsilon > 0, \exists C \in {\mathbf{R}}, \forall n \in {\mathbf{Z}}, 
\nu_p(a_n) \geq \epsilon |n| + C\} \] 
is the weak completion of $W[x,x^{-1}]$. 
Then any element of $\overline{H}^1(\Omega^{\bullet}_{A^{\dagger}})$ is written in the form 
$
\left[ (\sum_{n \in {\mathbf{Z}}} a_n x^n)dx \right]  
$ with $\sum_{n \in {\mathbf{Z}}} a_n x^n \in A^{\dagger}$, 
and it can be rewritten as 
\[ a_{-1}[x^{-1}dx] + \left[ (\sum_{n \not= -1} a_n x^n)dx \right] = 
a_{-1}[x^{-1}dx] +  \left[d(\sum_{n \not= -1} \frac{a_n}{n+1} x^{n+1})\right] = 
a_{-1}[x^{-1}dx] \in W[x^{-1}dx], \]
because $\sum_{n \not= -1} \frac{a_n}{n+1} x^{n+1} \in A^{\dagger} \otimes \Q$.  
Thus $\overline{H}^1(\Omega^{\bullet}_{A^{\dagger}}) = W[x^{-1}dx]$ and it 
is finitely generated over $W$, as required. 

If $r \geq 2$, we set $X_1 := {\mathbf{A}}^1_k \setminus \{\overline{\alpha}_1, \dots, \overline{\alpha}_{r-1}\}$, 
$X_2 := {\mathbf{A}}^1_k \setminus \{\overline{\alpha}_{r}\}$. 
Take lifts $\alpha_i \in W$ of $\overline{\alpha}_i$ for $1 \leq i \leq r$. 
Then, for $i=1,2$, 
$\overline{H}^1(X_i, W^{\dagger}\Omega^{\bullet}_X) \cong \overline{H}^1_{\rm MW}(X_i/W) 
= \overline{H}^1(\Omega^{\bullet}_{A_i^{\dagger}})$, where 
$A_1^{\dagger}$ is the weak completion of 
$W[x,(x-\alpha_1)^{-1}, \dots, (x - \alpha_{r-1})^{-1}]$ and 
$A_2^{\dagger}$ is the weak completion of 
$W[x,(x-\alpha_r)^{-1}]$. Also, 
$\overline{H}^1(X, W^{\dagger}\Omega^{\bullet}_X) \cong \overline{H}^1_{\rm MW}(X/W) 
= \overline{H}^1(\Omega^{\bullet}_{A^{\dagger}})$, where 
$A^{\dagger}$ is the weak completion of 
$W[x,(x-\alpha_1)^{-1}, \dots, (x - \alpha_{r})^{-1}]$. 
Then, as a special case of \cite[Lemma 7]{M_1972}, the canonical map 
$A_1^{\dagger} \oplus A_2^{\dagger} \to A^{\dagger}$ is surjective, and so 
the map $\Omega^1_{A_1^{\dagger}} \oplus \Omega^1_{A_2^{\dagger}} \to 
\Omega^1_{A^{\dagger}}$ is also surjective. Thus we see that 
the map $\overline{H}^1(\Omega^{\bullet}_{A_1^{\dagger}}) \oplus 
\overline{H}^1(\Omega^{\bullet}_{A_2^{\dagger}}) \to 
\overline{H}^1(\Omega^{\bullet}_{A^{\dagger}})$ is surjective. 
Because $\overline{H}^1(\Omega^{\bullet}_{A_1^{\dagger}}) \oplus 
\overline{H}^1(\Omega^{\bullet}_{A_2^{\dagger}})$ is finitely generated over $W$ 
by induction hypothesis, so is $\overline{H}^1(\Omega^{\bullet}_{A^{\dagger}})$, 
as required. 
\end{proof}

Consequently, on the finiteness of $\overline{H}^1(X_1, W^{\dagger}\Omega^{\bullet}_X)$ for an 
affine smooth curve $X$, we conjecture the following.

\begin{conjecture}
Let $X$ be an affine smooth curve over $k$. 
Then $\overline{H}^1(X_1, W^{\dagger}\Omega^{\bullet}_X)$ is finitely generated over $W$ if and only if 
its smooth compactification has genus $0$. 
\end{conjecture}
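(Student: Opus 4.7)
The ``if'' direction is the proposition just established, so I focus on the converse: for affine smooth $X$ with $g(X^{\rm cpt}) \geq 1$, show $\overline{H}^1(X, W^{\dagger}\Omega^{\bullet}_X)$ is not finitely generated. My plan is to combine Theorem \ref{main1} (infiniteness for superelliptic curves) with the descent-from-a-cover principle of Proposition \ref{main3}: find a generically \'etale morphism $X \to Y$ where $Y$ is already known to have infinite $\overline{H}^1$, and transfer the infiniteness back to $X$. Two preparatory reductions are free: the base-change isomorphism $\overline{H}^1(X, W^{\dagger}\Omega^{\bullet}_X) \otimes_W W(k') \cong \overline{H}^1(X \otimes_k k', W^{\dagger}\Omega^{\bullet}_{X \otimes_k k'})$ used in the previous proposition allows enlarging $k$ to any perfect extension, and Proposition \ref{main3} applied to an open immersion allows shrinking $X$ to any non-empty open subscheme.

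The genus one case is the most accessible. After enlarging $k$ and shrinking $X$, assume $X$ is the complement of finitely many points in an elliptic curve $E$. For $p \nmid 6$, a Weierstrass model $y^2 = x^3 + Ax + B$ after a translation in $x$ can be related to a superelliptic curve $y^2 + x^3 + \alpha = 0$ by a generically \'etale morphism, so Theorem \ref{main1} and Proposition \ref{main3} conclude. For $p \in \{2,3\}$, one instead appeals to Theorem \ref{main2} after checking that the Weierstrass coefficients of $E$ fall outside the exceptional set $E_{\infty} := \bigcup_d \bigcap_{d' \geq d} Z_{d'}$ of that theorem, which one can try to arrange by moving inside the isogeny class of $E$ if necessary.

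The genus $\geq 2$ case is the main obstacle, because a generic smooth curve of genus $\geq 2$ admits no non-constant morphism to a curve of lower genus, so the direct descent from a superelliptic case fails. My proposal here is twofold. First, strengthen Theorem \ref{main2} to show $E_{\infty} = \emptyset$, i.e., that every $C_{ab}$-curve has infinite $\overline{H}^1$; this is the conjectural statement in the remark following Theorem \ref{main2}, and would follow from proving that the polynomials $\overline{G}^{i_d, j}_{r, j}$ produced by iterating the Denef--Vercauteren reduction \eqref{eq6} have leading terms (in some fixed monomial order on the coefficients $z_{st}$) that vary with $d$ sufficiently nondegenerately. Second, show that every affine smooth curve of genus $\geq 1$ admits, after enlarging $k$ and shrinking, a finite generically \'etale morphism to some $C_{ab}$-curve; this should follow by choosing a general projection from a suitable linear subspace of a projective embedding of $X^{\rm cpt}$ to land in $\mathbf{P}^2$, and checking via the Jacobian criterion that the resulting plane model can be placed in $C_{ab}$-form.

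The hard part is the first half of the genus $\geq 2$ step. Controlling the intersections of the $Z_d$ uniformly in $d$ requires a careful analysis of how the iterated reduction interacts with the $p$-adic valuation, and there is no obvious reason for the polynomials $\overline{G}^{i_d, j}_{r, j}$ to have the necessary algebraic independence modulo $p$. A fallback strategy, if this proves intractable, is to bypass $C_{ab}$-models entirely and mimic the proof of Theorem \ref{main1} directly on $X$: choose a closed point $P \in X^{\rm cpt} \setminus X$ and produce, from a local coordinate at $P$, an explicit family of classes in $H^1(\Omega^{\bullet}_{A^{\dagger}})$ whose expansions in a chosen $K$-basis of $H^1_{\rig}(X/K)$ exhibit $p$-adic denominators growing without bound. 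The obstacle here is the absence of a global plane equation, so that the reduction formula \eqref{eq6} has no direct analogue and must be replaced by a local Gauss--Manin-type recursion near $P$ -- a technical step requiring new ideas beyond those deployed in the paper.
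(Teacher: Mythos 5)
This statement is a \emph{conjecture} in the paper: the authors prove only the ``if'' direction (the genus-$0$ proposition immediately preceding it) and explicitly leave the converse open, so there is no proof of theirs to compare yours against. Your proposal is accordingly a research program rather than a proof, and you are candid that its decisive steps --- emptiness of the exceptional set $\bigcup_d \bigcap_{d'\geq d} Z_{d'}$ in Theorem \ref{main2}, or a local substitute for the reduction formula \eqref{eq6} in the absence of a plane model --- are unresolved. Those are exactly the points where the authors themselves stop (see the remark after Theorem \ref{main2}), so as a roadmap the proposal is reasonable; but it cannot be accepted as a proof, and beyond the gaps you acknowledge, several reductions you present as routine are in fact false.

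Concretely: (1) Proposition \ref{main3} transfers infiniteness from the \emph{target} of a generically \'etale morphism to its \emph{source}. Applied to an open immersion $X' \hookrightarrow X$ it shows that infiniteness for $X$ implies infiniteness for $X'$, not the reverse; since $\overline{H}^1(X, W^{\dagger}\Omega^{\bullet}_X)$ sits as a sub-$W$-module of $\overline{H}^1(X', W^{\dagger}\Omega^{\bullet}_{X'})$ and a submodule of a non-finitely-generated module may well be finitely generated, ``shrinking $X$'' is not a free reduction. (2) In the genus-one case, any nonconstant morphism of smooth curves extends to the compactifications, so a generically \'etale map from an open subset of $E$ onto (an open subset of) $y^2 + x^3 + \alpha = 0$ forces $E$ to be isogenous to a $j=0$ elliptic curve; this fails for most $E$ (e.g.\ for $p \equiv 2 \bmod 3$ the target is supersingular, so no ordinary $E$ admits such a map), and a translation in $x$ cannot remove the linear term of a general Weierstrass equation. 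Thus Theorem \ref{main1} alone does not handle genus one even for $p \geq 5$; one is thrown back on Theorem \ref{main2} with its unproved emptiness of the exceptional locus. (3) For a generic curve of genus $\geq 3$, Riemann--Hurwitz and de Franchis leave no nonconstant map to any curve of genus $\geq 1$ except isomorphisms of the compactification, and such a curve is not itself a $C_{ab}$-curve: a generic plane quartic, for instance, has no point with Weierstrass semigroup $\langle 3,4\rangle$ (no hyperflex), and a general projection to $\mathbf{P}^2$ produces a \emph{singular} plane model, which is not a $C_{ab}$-curve in the paper's sense. So even granting the strengthened form of Theorem \ref{main2}, your architecture does not reach most curves of genus $\geq 3$, and the fallback of running the Denef--Vercauteren reduction locally at a point at infinity is, as you say, a new idea that neither you nor the paper supplies.
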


For higher dimensional varieties, 
we have the following as a simple consequence of 
Theorem \ref{main1}. 

\begin{theorem}\label{main4}
Let $X$ be a projective smooth variety over $k$ and let $a \geq 2$ be 
an integer prime to $p$. 
Then there exists a generically \'etale morphism $f: Y \to X$ of degree $a$ 
with $Y$ smooth such that $\overline{H}^1(Y, W^{\dagger}\Omega^{\bullet}_{Y})$ 
is not finitely generated over $W$. 
\end{theorem}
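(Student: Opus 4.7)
The plan is to realize $Y$ as a fibre product $U \times_{\mathbb{A}^1} C$, where $U$ is an affine open of $X$ equipped with a suitable non-constant function and $C$ is a superelliptic curve from Theorem \ref{main1}, then to transfer the explicit computation of Theorem \ref{main1} to $Y$, with the non-vanishing of the relevant cohomology class reduced to the curve case by the argument of Proposition \ref{main3}.

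Fix $b \geq 2$ coprime to $pa$ and $\bar\alpha \in k^\times$, and let $C \subset \mathbb{A}^2_k$ be the superelliptic curve $y^a + x^b + \bar\alpha = 0$ of Theorem \ref{main1}. Choose a non-empty affine open $U \subset X$ together with $\bar x \in \mathcal{O}(U)$ such that $d\bar x$ is nowhere vanishing on $U$; since $X$ is smooth, this is possible by taking $\bar x$ to be a member of a local system of parameters at a closed point and then shrinking $U$. Set $Y := U \times_{\mathbb{A}^1} C$, the closed subscheme of $U \times \mathbb{A}^1_y$ defined by $y^a + \bar x^b + \bar\alpha = 0$. The Jacobian criterion makes $Y$ smooth: where $y \neq 0$ the term $ay^{a-1}\,dy$ is nonzero, and where $y = 0$ one has $\bar x \neq 0$, so the term $b\bar x^{b-1}\,d\bar x$ is nonzero by the choice of $\bar x$. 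The composite $f \colon Y \to U \hookrightarrow X$ is therefore a generically étale morphism of degree $a$. Lifting $U$ to a smooth affine $W$-scheme $\Spec(A)$ with lifts $x \in A$, $\alpha \in W$ of $\bar x, \bar\alpha$, one obtains a smooth $W$-lift $\Spec(B)$ of $Y$ with $B = A[y]/(y^a + x^b + \alpha)$. The derivations of the equalities \eqref{eq10} and \eqref{eq11} in the proof of Theorem \ref{main1} rest solely on the relation $y^a = -(x^b + \alpha)$ and on differentials of monomials in $x$ and $y$, not on the ambient curve structure; they therefore run verbatim in $H^1(\Omega^\bullet_B) \otimes \Q$ and produce, for suitable $1 \leq r \leq b-1$ and $1 \leq j \leq a-1$, integral classes $[x^{i_d} y^j dx] \in H^1(\Omega^\bullet_B)$ satisfying $[x^{i_d} y^j dx] = C_d\,[x^{r-1} y^j dx]$ in $H^1(\Omega^\bullet_B) \otimes \Q$ with $\nu_p(C_d) \leq -d$.

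The expected principal obstacle is to show that $[x^{r-1} y^j dx]$ is nonzero in $H^1_{\rig}(Y/K)$. I reduce to the curve case by Bertini: choose a smooth affine curve $L \subset U$ along which $\bar x$ is non-constant, so that generic choice forces $d(\bar x|_L)$ to be nonzero at the generic point of $L$ and hence renders $\bar x|_L \colon L \to \mathbb{A}^1$ generically étale. Let $L'$ be the smooth locus of an irreducible component of $L \times_{\mathbb{A}^1} C$; it is a smooth affine curve locally closed in $Y$, and $L' \to C$ is a generically étale morphism of affine smooth curves. The argument of Proposition \ref{main3}, invoking the injectivity of $H^1_{\rig}$ both for open immersions of smooth curves and for finite étale covers, then furnishes an injection $H^1_{\rig}(C/K) \hookrightarrow H^1_{\rig}(L'/K)$. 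Since the inclusion $L' \hookrightarrow Y$ composed with the projection $Y \to C$ equals the natural map $L' \to C$, the composite $H^1_{\rig}(C/K) \to H^1_{\rig}(Y/K) \to H^1_{\rig}(L'/K)$ is this injection, so $H^1_{\rig}(C/K) \to H^1_{\rig}(Y/K)$ is itself injective and in particular sends the nonzero basis element $[x^{r-1} y^j dx]$ of Proposition \ref{DV2} to a nonzero class.

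With the non-vanishing established, the integral classes $[x^{i_d} y^j dx]$ project to elements of $\overline{H}^1_{\MW}(Y/W) \subset H^1_{\MW}(Y/W) \otimes \Q$; as $\nu_p(C_d) \to -\infty$ and $[x^{r-1} y^j dx] \neq 0$ in $H^1_{\MW}(Y/W) \otimes \Q$, these classes generate a $W$-submodule of $\overline{H}^1_{\MW}(Y/W)$ containing the line $K \cdot [x^{r-1} y^j dx]$, which is not finitely generated over $W$. The isomorphism $\overline{H}^1_{\MW}(Y/W) \cong \overline{H}^1(Y, W^\dagger \Omega^\bullet_Y)$ of \eqref{eq1} then yields the theorem.
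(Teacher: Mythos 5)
Your route is genuinely different from the paper's. The paper embeds $X$ in ${\mathbf{P}}^N_k$, forms a pencil $g:\widetilde{X}\to{\mathbf{P}}^1_k$ by blowing up the axis $A=H_1\cap H_2\cap X$, and pulls back along the \'etale locus $C'$ of the superelliptic cover $C\to{\mathbf{A}}^1_k$; after a finite extension $k'/k$ a point of $A$ gives a section of $Y_{k'}\to C'_{k'}$, so $\overline{H}^1(C'_{k'},W^{\dagger}\Omega^{\bullet}_{C'_{k'}})\to\overline{H}^1(Y_{k'},W^{\dagger}\Omega^{\bullet}_{Y_{k'}})$ is split injective and the curve case (Theorem \ref{main1} together with Proposition \ref{main3}) is used as a black box. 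You instead take $Y=U\times_{{\mathbf{A}}^1}C$ over an affine chart, rerun the identities \eqref{eq10}--\eqref{eq11} on $Y$ itself, and verify non-vanishing of $[x^{r-1}y^jdx]$ in $H^1_{\rig}(Y/K)$ by restricting to a curve $L'\subset Y$. That last reduction is sound, but your route incurs two obligations the paper avoids: (a) on a curve every $1$-form is closed since $\Omega^2=0$, whereas on the higher-dimensional lift of $Y$ the closedness of $x^iy^jdx$ and the validity of \eqref{eq10} are not literally ``verbatim'' --- they do hold, most cleanly because every form and relation involved is the pullback along $Y\to C$ of the corresponding identity on the curve lift, so you should argue by functoriality rather than by repetition; (b) the Bertini choice of $L$ needs care over a finite $k$ (Poonen's Bertini, or a finite extension of $k$ followed by the base change property the paper uses elsewhere). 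In exchange you avoid the pencil and the section and you exhibit the unbounded classes on $Y$ explicitly.

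The genuine gap is the integrality of $Y$. In this paper a variety is integral, and ``generically \'etale of degree $a$'' means $[k(Y):k(X)]=a$; but $U\times_{{\mathbf{A}}^1}C$ is irreducible only if $T^a+\overline{x}^b+\overline{\alpha}$ is irreducible over $k(U)$, equivalently only if $-(\overline{x}^b+\overline{\alpha})$ is not an $\ell$-th power in $k(U)$ for any prime $\ell\mid a$ (and not in $-4k(U)^4$ when $4\mid a$). Nothing in your choice of $U$ and $\overline{x}$ rules this out: take $X$ the smooth projective model of $s^a+t^b+\overline{\alpha}=0$, let $U$ be the affine locus with $s\neq 0$ (there $dt$ is nowhere vanishing) and $\overline{x}=t$; then $Y$ satisfies $y^a=s^a$ and breaks into components each of degree $1$ over $X$, even though each component is smooth, so your smoothness check passes while the conclusion fails. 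The repair is short but must be made: after translating $\overline{x}$ (possibly over a finite extension of $k$) arrange that the divisor $\{\overline{x}^b+\overline{\alpha}=0\}\subset U$ is nonempty; at any of its points $\overline{x}\neq 0$ and $d(\overline{x}^b+\overline{\alpha})=b\overline{x}^{b-1}d\overline{x}\neq 0$, so the divisor is smooth, hence reduced, hence $-(\overline{x}^b+\overline{\alpha})$ vanishes to order exactly one along some prime divisor and cannot be a proper power in $k(U)$. That forces irreducibility of $Y$ and restores the degree-$a$ claim.
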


\begin{proof}
Take a closed embedding $X \subset {\mathbf{P}}_k^N$ into a projective space and 
take two transversal hyperplanes $H_1, H_2$ in ${\mathbf{P}}_k^N$ which meet 
$X$ smoothly and transversally. Let $A := H_1 \cap H_2 \cap X$ and let $b: \widetilde{X} \to X$ 
be the blow-up of $X$ along $A$. Then we have canonically a pencil structure 
$g: \widetilde{X} \to {\mathbf{P}}_k^1$. 

Let $h: C \to {\mathbf{A}}_k^1 \hookrightarrow {\mathbf{P}}_k^1$
be the superelliptic curve $y^a = x^b + 1$ (where $b$ is a positive integer coprime to 
$ap$ and $x$ is the coordinate of ${\mathbf{A}}_k^1$), and let 
$C' \subset C$ be the open subscheme on which $h$ is \'etale. 
Now let $g': Y \to C'$ be the pullback of  $g: \widetilde{X} \to {\mathbf{P}}_k^1$ by the 
composite $C' \hookrightarrow C \xrightarrow{h} {\mathbf{P}}_k^1$. 
Then we have the canoncical map $Y \to \widetilde{X} \to X$ which 
is generically \'etale of degree $a$, and $Y$ is smooth. 

Take a finite extension $k'$ of $k$ such that $A$ admits a $k'$-rational point 
$t$. In the following,  for a scheme or a morphism of schemes $?$, we denote $? \otimes_k k'$ simply by $?_{k'}$. 
Then $b_{k'}^{-1}(t) \cong {\mathbf{P}}^1_{k'}$ and so it defines a section $s: {\mathbf{P}}^1_{k'} \to \widetilde{X}_{k'}$ of 
$g_{k'}: \widetilde{X}_{k'} \to {\mathbf{P}}_{k'}^1$, and it induces a section $s': C'_{k'} \to Y_{k'}$ of 
the map $g'_{k'}: Y_{k'} \to C'_{k'}$. 

Thus we have maps between cohomology groups modulo torsion 
\begin{equation}\label{eq:revise2}
\overline{H}^1(C'_{k'}, W^{\dagger}\Omega^{\bullet}_{C'_{k'}}) \xrightarrow{{g'}_{k'}^*} 
\overline{H}^1(Y_{k'}, W^{\dagger}\Omega^{\bullet}_{Y_{k'}}) \xrightarrow{{s'}^*} 
\overline{H}^1(C'_{k'}, W^{\dagger}\Omega^{\bullet}_{C'_{k'}}) 
\end{equation}
whose composition is the identity. So the map ${g'}_{k'}^*$ is injective. 
By Theorem \ref{main1} and Proposition \ref{main3}, 
$\overline{H}^1(C'_{k'}, W^{\dagger}\Omega^{\bullet}_{C'_{k'}})$ is not finitely generated over 
$W(k')$. Hence $\overline{H}^1(Y_{k'}, W^{\dagger}\Omega^{\bullet}_{Y_{k'}})$ is not 
finitely generated over $W(k')$ either. Then, since 
\[ \overline{H}^1(Y_{k'}, W^{\dagger}\Omega^{\bullet}_{Y_{k'}}) \cong 
\overline{H}^1(Y, W^{\dagger}\Omega^{\bullet}_{Y}) \otimes_W W(k'), \] 
we conclude that $\overline{H}^1(Y, W^{\dagger}\Omega^{\bullet}_{Y})$ is not finitely generated 
over $W$. So the proof is finished. 
\end{proof}

\begin{remark}\label{main5}
We can construct examples of non-affine smooth varieties 
$X$ such that $\overline{H}^1(X, W^{\dagger}\Omega^{\bullet}_{X})$ is not 
finitely generated over $W$ in the following two ways. 

First, let $k$ be a perfect field of characteristic $p>0$, 
let $a, b \geq 2$ be coprime integers prime to $p$ and let 
$C$ be the superelliptic curve 
$x^a + y^b + 1 = 0$. Let $k'$ be a finite extension of $k$ such that 
$C$ has two distinct $k'$-rational closed points $s, t$. Then, if we set 
$X := (C \times_k C) \,\setminus\, (s \times_k s)$, it is a non-affine smooth variety over $k$. 
On the other hand, we have morphisms 
\[ \iota:C_{k'} \cong C \times_k t \hookrightarrow X, \quad 
\pi: X \hookrightarrow C \times_k C \xrightarrow{\text{$1$st proj.}} C \] 
such that $\iota \circ \pi: C_{k'} \to C$ is the natural projection. 
Then we have maps between cohomology groups modulo torsion 
\[ \overline{H}^1(C, W^{\dagger}\Omega^{\bullet}_{C}) \xrightarrow{\pi^*} 
\overline{H}^1(X, W^{\dagger}\Omega^{\bullet}_{X}) \xrightarrow{\iota^*} 
\overline{H}^1(C_{k'}, W^{\dagger}\Omega^{\bullet}_{C_{k'}}) \] 
whose composition is the injection 
\[ \overline{H}^1(C, W^{\dagger}\Omega^{\bullet}_{C}) 
\hookrightarrow 
\overline{H}^1(C, W^{\dagger}\Omega^{\bullet}_{C}) 
\otimes_{W} W(k') \cong  \overline{H}^1(C_{k'}, W^{\dagger}\Omega^{\bullet}_{C_{k'}}). \] 
Thus $\pi^*$ is injective.  
By Theorem \ref{main1}, 
$\overline{H}^1(C, W^{\dagger}\Omega^{\bullet}_{C})$ is not finitely generated over 
$W$. Hence $\overline{H}^1(X, W^{\dagger}\Omega^{\bullet}_{X})$ is not 
finitely generated over $W$ either. 

Second, consider the smooth variety $Y$ in the proof of Theorem \ref{main4} 
such that $\dim Y \geq 2$. 
If $Y$ is not affine, this gives an example we want. Otherwise, 
take a closed point $t$ of $Y$ whose inverse image in $Y_{k'}$ does not meet 
the image of $s': C'_{k'} \to Y_{k'}$ in the proof of Theorem \ref{main4}. 
Then the diagram \eqref{eq:revise2} remains to exist if we replace $Y$ by 
$Y' := Y \setminus t$. Then $Y'$ is a non-affine smooth variety such that 
$\overline{H}^1(Y', W^{\dagger}\Omega^{\bullet}_{Y'})$ is not 
finitely generated over $W$, as required. 
\end{remark}

\end{document}